\tikzstyle{vecArrow} = [thick, decoration={markings,mark=at position
\tikzstyle{innerWhite} = [semithick, white,line width=1.4pt, shorten >= 4.5pt]
\newtheorem{Theorem}{Theorem}[section]
\newtheorem{Lemma}[Theorem]{Lemma}
\newtheorem{Corollary}[Theorem]{Corollary}
\newtheorem{Proposition}[Theorem]{Proposition}
\newtheorem{Example}[Theorem]{Example}
\def\depth{\operatorname{depth}} 
\def\p{\operatorname{p}}
\def\core{\operatorname{core}}
\def\NN{{\mathbb N}}
\def\ZZ{{\mathbb Z}} 
\def\a{{\mathbf a}}
\def\b{{\mathbf b}}
\def\e{{\mathbf e}}
\def\1{{\mathbf 1}}
\def\mm{{\mathfrak m}}
\def\xx{{\rm \bf{x}}}
\def\F{{\mathcal F}}
\def\H{{\Omega}}
\def\D{{\Delta}}
\def\G{{\Gamma}}
\begin{document}
\title{Combinatorial characterizations\\ of the saturation and the associated primes\\
 of the fourth power of edge ideals}
\author{Ha Thi Thu Hien}
\address{Foreign Trade University, Hanoi, Vietnam}
\email{thuhienha504@gmail.com}

\author{Ha Minh Lam}
\address{Institute of Mathematics, Vietnam}
\email{hmlam@math.ac.vn}

\subjclass[2010]{13C05, 13C15, 13F55}
\keywords{Edge ideal, power, associated prime, depth, dominating subgraph, odd cycle.}
\thanks{This research is funded by Vietnam National Foundation for Science and Technology Development (NAFOSTED) under the grant number 101.04-2014.52. Part of the paper was written while the second author visited VIASM, Vietnam Institute for Advanced Study in Mathematics. This author would like to thank VIASM for hospitality and support.}

\begin{abstract}
To compute the local cohomology of powers of edge ideals one needs to know their saturations.
The saturation of the second and third powers has been described in terms of the graph in \cite{TeraiTrung} and \cite{HLT}. In this article, we give a combinatorial description of the generators of the saturation of the fourth power. As a consequence, we are able to give a complete classification of the associated primes of the fourth power of edge ideals in terms of the graph.
\end{abstract}
\maketitle
\begin{center}
{\it Dedicate to Professor Ngo Viet Trung in honors of his sixtieth birthday}
\end{center}

\section*{Introduction}

Let $R = K[x_1, x_2, \ldots, x_n]$ be a polynomial ring over a field $K$ and $Q$ a monomial ideal in $R$. Then $R/Q$ is an $\NN^n$-graded algebra. Due to a formula of Y. Takayama \cite{Takayama}, the graded component of the local cohomology modules of degree $\a \in \ZZ^n$ can be expressed in terms of a simplicial complex $\D_\a$ on the vertex set $V = \{1,...,n\}$. By a recent work of  Terai and Trung \cite{TeraiTrung}, this simplicial complex can be described as follows.
\par

Let $\F(\D_a)$ denote the set of the factets of $\D_a$. If $\a = (a_1,...,a_n)$, we set $G_\a := \{i \in V|\ a_i < 0\}$. 
and $x^\a = x_1^{a_1}\cdots x_n^{a_n}$. For every subset $G \subseteq V$ let $\a_G$ denote the vector obtained from $\a$ by setting $a_i = 0$ for all $i \in G$. Then
$$\F(\D_\a)  = \{G \setminus G_\a|\ G_\a \subseteq G \subseteq V,\  x^{\a_G} \in \widetilde {Q_G} \setminus Q_G\},$$ 
where $Q_G$ is a well determined  ideal and $\widetilde {Q_G}$ denotes the saturation of $Q_G$. Thus, the simplicial complex $\D_a$ depends on the saturations of the ideals $Q_G$. \par

Let $\G$ be a simple graph on the vertex set $V$. We denote by $I(\G)$ the {\it{edge ideal}} of $\G$, which is generated by the monomials $x_ix_j , \{i, j\} \in \G$. For convenience set $I = I(\G)$. It has been shown in \cite{HLT} that $x^{\a_G} \in \widetilde {(I^t)_G} \setminus (I^t)_G$ if and only if $x^{\a_U} \in \widetilde {J^t} \setminus J^t$ for some subset $U \subseteq V$, where $J$ is the edge ideal of the reduced subgraph of $\G$ on $V \setminus U$. Therefore, to compute the local cohomology modules of $R/I^t$ we need to know the saturation of the $t$-th powers of edge ideals. \par

Moreover, if there exists a monomial $x^\a \in \widetilde {(I^t)_G} \setminus (I^t)_G$, then the ideal generated by the variables $x_i$,  $i \in V \setminus G,$ must be an associated prime of $I^t$, and every associated prime of $I^t$ arises in this way. Thus, if we can describe the monomials of the saturation of the $t$-th powers of edge ideals, we will be able to classify the associated primes of the $t$-th powers of edge ideals. 
Associated primes of powers of squarefree monomials ideals have been studied by many authors (see e.g. \cite{ChenMoreySung}, \cite{FHVt Critical}, \cite{FHVt odd}, \cite{FHVt Coloring}, \cite{HM}, \cite{MMV}, \cite{HS}). However, not much is known for the initial powers of $I^t$. \par

For $t = 2$, Terai and Trung \cite{TeraiTrung} showed that $x^\a \in \widetilde {I^2} \setminus I^2$ for some $\a \in \NN^n$ if and only if $x^\a$ is of the form $x_ix_jx_k$, where the induced subgraph on $\{i,j,k\}$ is a  dominating triangle of $\G$. Recall that a subset of $V$ is called dominating if every vertex of $\G$ is adjacent to this subset. From this it follows that every associated prime of $I^2$ corresponds to a cover of $\G$ which is minimal among the covers containing the closed neighborhood of a triangle of $\G$. In particular, $\depth R/I^2 = 0$ if and only if $\G$ has a dominating triangle. The latter result was discovered independently by Herzog and Hibi in \cite{HH}. \par

For $t \ge 3$, the problem is much more difficult because a monomial $x^\a \in \widetilde {I^t} \setminus I^t$ does not need not to be squarefree. By \cite{HLT}, one can associate with such a monomial a vertex weighted graph $\G_\a$ with certain good matching properties,  and one can describe all such weighted graphs for $t = 3$. From this it follows that there are five types of subgraphs such that every associated prime of $I^3$ corresponds to a cover of $\G$ which is minimal among the covers containing the closed neighborhood of such a subgraph of $\G$. It remains an open problem whether one can describe all monomials $x^\a \in \widetilde {I^t} \setminus I^t$ in combinatorial terms for $t \ge 4$. \par

 In this paper, we use the method of \cite{HLT} to solve the above problem for $t = 4$ and we give a complete classification of the associated primes of $I^4$. We give all types of subgraphs such that every associated prime of $I^4$ corresponds to a cover of $\G$ which is minimal among the covers containing the closed neighborhood of such a subgraph of $\G$. Together with the results of \cite{TeraiTrung}, \cite{HLT}, our results may be used to find combinatorial patterns of the associated primes of every power $I^t$ and to study the function $\depth R/I^t$.  \par

This paper is organized as follows. In Section 1, we discuss the relationship between weighted graphs and the saturation and the associated primes of $I^t$. In Section 2, we explicitly describe all weighted graphs associated to monomial in $\widetilde {I^4} \setminus I^4$ . Section 3 is devoted to the characterization of the saturation and the  associated primes of $I^4$. \par

We assume that the readers are familiar with basic notions in Commutative Algebra and Graph Theory, which can be found, for instance, in the books \cite{E} and \cite{We}.

\noindent{\it Acknowledgment}. The authors would like to thank Prof. Ngo Viet Trung for his guidance. 
%%%%%%%%%%%%%%%%%%%%%%%%%%%%%%%%

\section{Weighted graphs and saturation}

A (vertex) {\em weighted graph} is a simple graph whose vertices are assigned a positive integers called {\em weight}. Unless otherwise specified, we denote by $a_i$ the weight of the vertex $i$. The simple graph alone is called the {\em base} of the weighted graph. Every simple graph is a weighted graph whose vertices have the {\em trivial weight}, i.e. weight one.
\par

Let $\H$ be a weighted graph on a vertex set $U$. A {\em matching} of $\H$ is a family of edges (not necessarily different) in which each vertex appears no more times than its weight. The largest possible number of edges of a matching of $\H$ is called the {\em matching number} of $\H$, denoted by $\nu(\H)$.  For every vertex $i \in U$, we denote by $N_\H(i)$ the set of all adjacent vertices of $i$ and by $\deg_\H(i)$ the sum of the weights of the vertices of $N_\H(i)$. For every subset $N$ of $U$ we denote by $\H - N$ the induced weighted subgraph obtained from $\H$ by deleting all vertices in $N$ and their adjacent edges. \par

Following \cite[Section 2]{HLT}, we call $\H$  a {\em $t$-saturating} weighted graph if the following conditions are satisfied: \par
{\rm (i) } $\nu(\H) < t$, \par
{\rm (ii)} $\nu(\H - N_\H(i)) \ge t - \deg_\H(i)$ for all $i \in U$. 

\begin{Example} \label{matching number} {\rm Let $\H$ be the weighted graph whose base is the union of a triangle $\{1,2,3\}$ and an edge $\{3,4\}$ and whose vertices have the weights $1,1,2,1$ (Figure 1). 

\begin{figure}[ht!]

\begin{tikzpicture}[scale=0.6]

\draw [thick] (8,0) coordinate (a) -- (8,2) coordinate (b) ;
\draw [thick] (8,2) coordinate (b) -- (9.5,1) coordinate (c) ;
\draw [thick] (9.5,1) coordinate (c) -- (8,0) coordinate (a) ; 
\draw [thick] (9.5,1) coordinate (c) -- (11.5,1) coordinate (d);

\draw (11.5,1) node[right] {$4$};
\draw (8,0) node[left, below] {$1$};
\draw (8,2) node[left, above] {$2$};
\draw (10,1) node[right,above] {$3 (2\times)$};    
\fill (a) circle (2pt);
\fill (b) circle (2pt);
\fill (c) circle (2pt);
\fill (d) circle (2pt);
  
\end{tikzpicture}
\caption{}
\end{figure}

It is easy to check that $\nu(\H) = 2$, $\nu(\H-N_\H(i)) = 2$ for $i = 1,2,4$, $\nu(\H-N_\H(3)) = 1$, and $\deg_\H(i) = 3$ for $i = 1,2,3$, $\deg_\H(4) = 2$. Hence, $\H$ is not $2$ saturating but $3$-saturating and $4$-saturating.}
\end{Example}

The notion of $t$-saturating weighted graphs was introduced in order to study the saturation of the $t$-th power of edge ideals. \par

Let $\G$ be a simple graph on $V = \{1,...,n\}$. As before, we denote by $I = I(G)$ the edge ideal of $\G$. Given a vector $\a \in \NN^n$, we denote by $\G_\a$ the weighted graph whose base is the induced subgraph of $\G$ on the vertex set $V_\a:= \{i |\ a_i > 0\}$ and whose vertices $i$ has the weights $a_i$ for each $i \in V_\a$. For a vertex $i \in V$, we denote by $N_\a(i)$ the set of all adjacent vertices of $i$ in $\G_\a$ and set $\deg_{\a}(i) = \sum_{j \in N_\a(i)} a_j$. Note that $N_{\G_\a}(i) = N_\a(i)$ and $\deg_{\G_\a}(i) = \deg_\a(i)$ for $i \in V_\a$.

\begin{Theorem}\label{satthen} \cite[Theorem 2.1]{HLT}
$\xx^\a \in  \tilde{I^t} \setminus I^t$ if and only if the following conditions are satisfied:\par
{\rm (i)} $\G_\a$ is $t$-saturating, \par
{\rm (ii)}  $\nu(\G_\a  - N_\a(i)) \ge t - \deg_\a(i)$ for all $i \in V \setminus V_\a$. 
\end{Theorem}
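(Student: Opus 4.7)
The plan is to set up a dictionary between membership in $I^t$ and matchings in $\G_\a$. The basic translation, which I would verify first, is that $x^\a \in I^t$ if and only if $\nu(\G_\a) \ge t$. Indeed, $x^\a \in I^t$ means that there exist edges $e_1,\ldots,e_t$ of $\G$ (with repetition) whose total usage at each vertex $v$ is at most $a_v$; since a vertex with $a_v = 0$ cannot be used at all, the admissible edges are exactly those of $\G_\a$, and this is precisely a matching of size $t$ in the weighted graph $\G_\a$. In particular, the first clause of the $t$-saturating property, $\nu(\G_\a) < t$, is equivalent to $x^\a \notin I^t$.

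For the saturation part, I would use the standard identity $\widetilde{I^t} = \bigcap_{i \in V}\bigcup_{N}(I^t : x_i^N)$, which reduces $x^\a \in \widetilde{I^t}$ to the condition that for each $i \in V$ there is some $N$ with $x_i^N x^\a \in I^t$. The central lemma is then
\begin{equation*}
\bigl(\exists\, N:\ x_i^N x^\a \in I^t\bigr) \iff \nu(\G_\a - N_\a(i)) \ge t - \deg_\a(i).
\end{equation*}
Granted this lemma, taking the conjunction over $i \in V_\a$ supplies the second clause of the $t$-saturating property (condition (i)), and over $i \in V \setminus V_\a$ it is exactly condition (ii). The $(\Leftarrow)$ direction of the lemma is immediate: take a matching of size $t - \deg_\a(i)$ in $\G_\a - N_\a(i)$ and extend it by using each star edge $\{i,j\}$, $j \in N_\a(i)$, exactly $a_j$ times (legal once the weight at $i$ is unbounded), producing a matching of size $\ge t$ in the graph $\G_\a$ augmented with vertex $i$ at infinite weight; by the basic translation this puts $x_i^N x^\a$ in $I^t$ for $N$ sufficiently large.

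For the harder $(\Rightarrow)$ direction I would apply an exchange argument: given any size-$t$ matching $M$ in the augmented graph, whenever a non-star edge $\{j,k\} \in M$ has $j \in N_\a(i)$, swap it for the star edge $\{i,j\}$. This preserves the usage count at $j$ (one use of $j$ is traded for another), frees one use of $k$, and is absorbed at $i$; the process terminates because the total non-star usage of $N_\a(i)$ strictly decreases with each swap. The surviving non-star matching $M'$ lies in $\G_\a - N_\a(i)$, while the star part uses each $j \in N_\a(i)$ at most $a_j$ times and hence has total size $\le \deg_\a(i)$; thus $|M'| \ge t - \deg_\a(i)$. The main obstacle is the bookkeeping in this exchange, particularly the borderline case where $k$ also lies in $N_\a(i)$ and where the swap must be certified compatible with the current weight profile at both $j$ and $k$ across successive iterations. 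Everything else—the dictionary between $I^t$-membership and matchings, the local-cohomology description of the saturation, and the constructive direction of the lemma—is standard or a routine unpacking of definitions.
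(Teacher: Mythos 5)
Your argument is correct, and the paper itself offers nothing to compare it against: the theorem is stated without proof, being quoted from \cite[Theorem 2.1]{HLT}, whose argument rests on the same two ingredients you use, namely the dictionary $x^{\b}\in I^t \iff \nu(\G_{\b})\ge t$ and the reduction of the saturation to the colon ideals $\bigcup_N (I^t : x_i^N)$, with the conjunction over $i\in V_\a$ yielding clause (ii) of the $t$-saturating condition and the conjunction over $i\in V\setminus V_\a$ yielding condition (ii) of the theorem. The exchange step you flag as the main obstacle does go through exactly as you describe---trading a non-star edge $\{j,k\}$ with $j\in N_\a(i)$ for the star edge $\{i,j\}$ leaves the multiplicity at $j$ unchanged, only lowers it at $k$, and is absorbed by the unbounded weight at $i$, so the multiset remains a size-$t$ matching and the number of non-star edges meeting $N_\a(i)$ strictly decreases even when $k\in N_\a(i)$---so there is no gap.
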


In many cases, condition (ii) of Theorem \ref{satthen} is equivalent to the following property of $V_\a$.

\begin{Corollary} \label{dominating} \cite[Corollary 2.2]{HLT}
If $x^\a \in  \widetilde{I^t} \setminus I^t$, then $V_\a$ is a dominating set of $\G$.
\end{Corollary}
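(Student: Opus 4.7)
The plan is to derive the corollary directly from Theorem \ref{satthen} by an elementary contradiction argument. Assume $x^\a \in \widetilde{I^t} \setminus I^t$, so that the two conditions of Theorem \ref{satthen} hold for $\G_\a$. We want to show that every vertex of $\G$ lies in $V_\a$ or is adjacent to some vertex of $V_\a$; equivalently, for every $i \in V \setminus V_\a$ the set $N_\a(i)$ of neighbors of $i$ in $\G_\a$ is nonempty.

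Suppose, toward a contradiction, that some $i \in V \setminus V_\a$ has $N_\a(i) = \emptyset$; that is, $i$ has no neighbor among the vertices carrying positive weight. First I would observe that then $\deg_\a(i) = \sum_{j \in N_\a(i)} a_j = 0$, since the sum is empty. Second, since deletion of the empty set leaves $\G_\a$ unchanged, we have $\G_\a - N_\a(i) = \G_\a$, and thus condition (ii) of Theorem \ref{satthen} applied to this $i$ gives
\[
\nu(\G_\a) \;=\; \nu(\G_\a - N_\a(i)) \;\ge\; t - \deg_\a(i) \;=\; t.
\]
On the other hand, condition (i) together with part (i) of the definition of a $t$-saturating weighted graph forces $\nu(\G_\a) < t$. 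These two inequalities are incompatible, so no such $i$ can exist, and $V_\a$ is dominating.

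I expect no real obstacle here, since the argument is a one-line consequence of Theorem \ref{satthen}: the only subtlety is making sure the two uses of the matching number refer to the same graph (which is automatic once $N_\a(i) = \emptyset$) and that the convention $\deg_\a(i) = 0$ over an empty sum is applied correctly. The content of the corollary really lies in Theorem \ref{satthen}; the corollary simply extracts the special case $N_\a(i) = \emptyset$ as a purely combinatorial necessary condition on $V_\a$.
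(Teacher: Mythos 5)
Your argument is correct: taking $i \in V \setminus V_\a$ with $N_\a(i) = \emptyset$ gives $\deg_\a(i)=0$ and $\G_\a - N_\a(i) = \G_\a$, so condition (ii) of Theorem \ref{satthen} forces $\nu(\G_\a) \ge t$, contradicting condition (i) of the definition of a $t$-saturating graph. The paper states this corollary without proof (citing \cite{HLT}), and your derivation is exactly the intended one-line consequence of Theorem \ref{satthen}.
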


Using the notion of $t$-saturating graphs one can also characterize the associated primes of $I^t$ as follows. \par

For a subset $F \subseteq V$ we denote by $P_F$ the ideal generated by the variables $x_i$, $i \in F.$ It is well known that the associated primes of $I^t$ are of the form $P_F$, where $F$ is a cover of $\G$, i.e a set of vertices which meets every edge of $\G$. It is also known that $P_F$ is a minimal associated primes of $I^t$ if and only if $F$ is a minimal cover of $\G$. Thus, we only need to characterize the embedded (i.e. non-minimal) associated primes of $I^t$. \par

Let $\core(F)$ denote the set of vertices in $F$ which are not adjacent to any vertex in $V \setminus F$. Note that $\core(F) = V \setminus N[V \setminus F]$, where for a subset $U  \subseteq V$ we denote by $N[U]$ the closed neighborhood of $U$, i.e. the union of $U$ with the set of the vertices adjacent to some vertex of $U$. It is easy to see that a cover $F$ is minimal if and only if $\core(F) = \emptyset$. 

\begin{Theorem} \label{associated prime It} \cite[Theorem 2.4]{HLT} 
Let $F$ be a cover of $\G$ with $\rm{core}(F) \neq \emptyset$. Then $P_F$ is an embedded associated prime of $I^t$ if and only if $F$ is minimal among the covers containing $N[V_\a]$ for some $\a \in \NN^n$ such that  $\G_\a$ is $t$-saturating and $\nu(\G_\a  - N_\a(i)) \ge t - \deg_\a(i)$ for all $i \in \rm{core}(F) \setminus V_\a$.  
\end{Theorem}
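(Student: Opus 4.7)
The plan rests on the standard equivalence for monomial ideals: $P_F\in\Ass(R/I^t)$ if and only if there is a monomial $x^\b$ with $I^t:x^\b=P_F$. Setting $x_j\mapsto 1$ for $j\notin F$ (dehomogenization), this is equivalent to $\widetilde{(I^t)_{V\setminus F}}\ne(I^t)_{V\setminus F}$ in $R_F:=K[x_i:i\in F]$, a saturation statement in a smaller polynomial ring. Since $\core(F)\ne\emptyset$, $F$ is not a minimal cover, so $P_F$ is automatically embedded whenever it is associated. The colon equation unpacks into \emph{inclusion} ($x_ix^\b\in I^t$ for every $i\in F$) and \emph{sharpness} ($\phi_F(x^\b)\notin(I^t)_F$, where $\phi_F$ is the dehomogenization above); both are matching statements in weighted subgraphs of $\G$, amenable to Theorem~\ref{satthen}. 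A useful preliminary is the chain $V_\a\subseteq\core(F)\subseteq N[V_\a]\subseteq F$: the first inclusion holds because every neighbor of a $V_\a$-vertex lies in $N[V_\a]\subseteq F$, while the second follows from the minimality of $F$ (otherwise some $v\in\core(F)\setminus N[V_\a]$ could be removed from $F$ without breaking the cover property or the containment of $N[V_\a]$).

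For direction $(\Leftarrow)$, I would construct a witness $\b$ with support in $F$, setting $b_i=a_i$ on $V_\a$, choosing $b_i$ just below the threshold $\deg_\a(i)$ on $\core(F)\setminus V_\a$ (so that a single added unit at $i$ crosses into a $t$-matching via the explicit $\nu$-hypothesis), and assigning calibrated weights on $F\setminus\core(F)$. Inclusion splits into three cases: for $i\in V_\a$ it follows from condition~(ii) of $t$-saturating applied to $\G_\a$; for $i\in\core(F)\setminus V_\a$ from the $\nu$-hypothesis; for $i\in F\setminus\core(F)$ from the extra weight on $F\setminus\core(F)$ combined with the linear generator $x_i$ of $I_F$ coming from an edge $\{i,j\}$ of $\G$ with $j\notin F$. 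Sharpness is the central matching bound, and follows from $\nu(\G_\a)<t$ plus the sub-threshold calibration, which prevents any $t$-fold product of generators of $I_F=I(\G|_F)+(x_i:i\in F\setminus\core(F))$ from dividing $x^\b$.

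For direction $(\Rightarrow)$, starting from a witness $x^\b$ of the colon, I would thin $\b$ iteratively, removing weight at vertices of $F$ as long as both inclusion and sharpness survive, arriving at a minimal $\a$ with $V_\a\subseteq\core(F)$. Theorem~\ref{satthen} then gives that $\G_\a$ is $t$-saturating; the $\nu$-bound on $i\in\core(F)\setminus V_\a$ follows from inclusion at such $i$, the $\core(F)$ constraint forcing the witnessing matching to lie inside $\G_\a$; sharpness at $\c=e_j$ for $j\in N(V_\a)\setminus V_\a$ yields $N[V_\a]\subseteq F$; and the minimality of $F$ among covers containing $N[V_\a]$ comes from the minimality built into the thinning procedure.

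\textbf{Main obstacle.} The central technical difficulty is the $(\Leftarrow)$ construction: the weights of $\b$ on $F\setminus\core(F)$ and on $\core(F)\setminus V_\a$ must be calibrated so that a single added unit at any $i\in F$ tips the matching number past $t$, while the baseline $\b$ itself does not. The mixed nature of $I_F$, with both quadratic generators from $I(\G|_F)$ and linear generators $x_i$ from edges crossing out of $F$, adds a bookkeeping layer of matching versus absorption that must be carefully balanced against $\nu(\G_\a)<t$. In the $(\Rightarrow)$ direction, the analogous difficulty is verifying that the iterative thinning terminates at an $\a$ simultaneously satisfying all four desired properties.
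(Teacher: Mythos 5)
First, a point of reference: the paper does not prove this statement at all --- it is quoted verbatim from \cite[Theorem 2.4]{HLT}, so there is no in-paper proof to compare against. Judged on its own, your proposal has the right high-level frame (pass to the dehomogenization $(I^t)_{V\setminus F}$ in $K[x_i: i\in F]$, reduce to a matching statement, feed it to Theorem \ref{satthen}), and your chain $V_\a\subseteq\core(F)\subseteq N[V_\a]\subseteq F$, with the observation that minimality of $F$ over $N[V_\a]$ is equivalent to $\core(F)\subseteq N[V_\a]\subseteq F$, is correct and genuinely useful. But the proof is not there yet, and the place where it is missing is exactly the place you flag as the ``main obstacle.''

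The central gap is the $(\Leftarrow)$ witness. You insist on producing an explicit monomial $x^\b$ with $I^t:x^\b=P_F$, which forces the ``single added unit tips the matching number'' calibration --- and your specific recipe is in trouble: assigning positive weight to the vertices of $F\setminus\core(F)$ tends to destroy sharpness, because after dehomogenization the ideal acquires the \emph{linear} generators $x_l$, $l\in F\setminus\core(F)$, and a product of several of these (possibly mixed with edges of $\G_F$) will divide your candidate $x^\b$; already for $t=2$ and $|F\setminus\core(F)|\ge 2$ the naive candidate lands in the square of the dehomogenized ideal. The standard way out is to prove the weaker statement $x^\a\in\widetilde{(I^t)_{V\setminus F}}\setminus (I^t)_{V\setminus F}$ with $x^\a$ itself (no calibration): non-membership is exactly $\nu(\G_\a)<t$ since $V_\a\subseteq\core(F)$ keeps the linear generators out of play; membership in the saturation only requires $x_i^kx^\a$ to lie in the ideal for \emph{large} $k$, which is trivial for $i\in F\setminus\core(F)$ (take $k=t$ and use $x_i^t$) and is the inequality $\deg_\a(i)+\nu(\G_\a-N_\a(i))\ge t$ for $i\in\core(F)$. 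One then gets $\mm_F\in\Ass$ abstractly from $H^0_{\mm_F}\ne 0$, never constructing the colon witness by hand. Relatedly, your ``inclusion'' clause is stated as $x_ix^\b\in I^t$ but justified via the linear generators of the dehomogenized ideal, which only give membership in $(I^t)_{V\setminus F}$; these are not the same condition, and the discrepancy is precisely the $k=1$ versus $k\gg 0$ issue above. Finally, in the $(\Rightarrow)$ direction the real work is extracting from a witness $\b$ an $\a$ supported in $\core(F)$: one must account for how much of the weight of $\b$ on $F\setminus\core(F)$ is absorbed by the linear generators versus by edges of $\G_F$ (this is what the ``reduced subgraph'' lemma of \cite{HLT} quoted in the introduction does); ``iterative thinning'' is not shown to terminate at an $\a$ satisfying all four required properties simultaneously, so that direction is also unproved as written.
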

 
According to the above two theorems, to compute the saturation and the associated primes of $I^t$ one needs to know how a $t$-saturating weighted graph looks like. \par

In general,  one can reduce the description of a $t$-saturating weighted graphs to the case of a simple graph
as follows.   \par

Let $\H$ be a weighted graph. The {\emph{polarization}} $\p(\H)$ of $\H$ is the simple graph obtained from the base of $\H$ by replacing each vertex $i$ by $a_i$ new vertices $i_1,... ,i_{a_i}$ and every edge $\{i,j\}$ by $a_ia_j$ edges $\{i_t,j_u\}$, $t = 1,...,a_i$ and $u = 1,...,a_j$.  By this definition, all vertices $i_1,... ,i_{a_i}$ of $p(\H)$ are non-adjacent and have the same neighborhood. 

\begin{Example} \label{pol of a graph} 
{\rm Let $\H$ be the weighted graph of Example \ref{matching number}. Then $\p(\H)$ can be visualized as in Figure 2.}
\end{Example}

\begin{figure}[ht!]

\begin{tikzpicture}[scale=0.6]

\draw [thick] (8,0) coordinate (a) -- (8,2) coordinate (b) ;
\draw [thick] (8,2) coordinate (b) -- (9.5,1) coordinate (c) ;
\draw [thick] (9.5,1) coordinate (c) -- (8,0) coordinate (a) ; 
\draw [thick] (9.5,1) coordinate (c) -- (11.5,1) coordinate (d);

\draw (10.25,-1.30) node[above] {$\H$};

\draw (11.5,1) node[right] {$4$};
\draw (8,0) node[left, below] {$1$};
\draw (8,2) node[left, above] {$2$};
\draw (10,1) node[right,above] {$3 (2\times)$};    
\fill (a) circle (2pt);
\fill (b) circle (2pt);
\fill (c) circle (2pt);
\fill (d) circle (2pt);

\draw (13,1) node[right] {$\longrightarrow$};
  
\draw [thick] (16,0) coordinate (a) -- (16,2) coordinate (b) ;
\draw [thick] (16,2) coordinate (b) -- (17.5,1) coordinate (c) ;
\draw [thick] (17.5,1) coordinate (c) -- (16,0) coordinate (a) ; 
\draw [thick] (17.5,1) coordinate (c) -- (19.5,1) coordinate (d);
\draw [thick] (16,2) coordinate (b) -- (17.5,3) coordinate (e) ;
\draw [thick] (16,0) coordinate (a) -- (17.5,3) coordinate (e) ;
\draw [thick] (17.5,3) coordinate (e) -- (19.5,1) coordinate (d);
 
\draw (18.20,-1.30) node[above] {$\p(\H)$};

\draw (19.5,1) node[right] {$4$};
\draw (16,0) node[left, below] {$1$};
\draw (16,2) node[left, above] {$2$};
\draw (17.5,1) node[right,above] {$3_1$}; 
\draw (17.5,3) node[right,above] {$3_2$};

\fill (a) circle (2pt);
\fill (b) circle (2pt);
\fill (c) circle (2pt);
\fill (d) circle (2pt);
 
\end{tikzpicture}
\caption{}
\end{figure}
 
\begin{Lemma}   \label{matching}
$\nu(\H) = \nu(\p(\H))$.
\end{Lemma}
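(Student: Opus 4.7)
The plan is to establish the equality by constructing, in each direction, a matching of one graph from a matching of the other whose size is preserved. Since a matching of the weighted graph $\H$ is defined as a multiset of edges in which each vertex $i$ appears at most $a_i$ times, while a matching of the simple graph $\p(\H)$ is a set of edges with no repeated endpoint, the two notions should be essentially the same once one identifies the $a_i$ copies $i_1,\ldots,i_{a_i}$ in $\p(\H)$ with the $a_i$ ``slots'' of $i$ in $\H$.

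For the inequality $\nu(\H) \le \nu(\p(\H))$, I would start from a matching $M$ of $\H$ realizing $\nu(\H)$. For each vertex $i \in U$, let $k_i \le a_i$ denote the total number of times $i$ appears among the edges of $M$ (counted with multiplicity). I would then choose, for each edge of $M$ incident to $i$, a distinct slot index from $\{1,\ldots,a_i\}$; this is possible precisely because $k_i \le a_i$. Doing this simultaneously at both endpoints of every edge $\{i,j\}$ of $M$ yields, for each occurrence, an edge $\{i_s, j_t\}$ of $\p(\H)$, and different edges of $M$ (or different copies of the same edge) are mapped to edges using disjoint indices. The resulting edge set is a matching of $\p(\H)$ of the same cardinality as $M$.

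For the reverse inequality $\nu(\p(\H)) \le \nu(\H)$, I would take a matching $M'$ of $\p(\H)$ and project it to $\H$ by sending each edge $\{i_s, j_t\} \in M'$ to $\{i,j\}$, with multiplicity equal to the number of $M'$-edges mapping there. Because the vertices $i_1,\ldots,i_{a_i}$ are pairwise non-adjacent in $\p(\H)$, every edge of $M'$ incident to some $i_s$ uses a different $s$, so $i$ appears at most $a_i$ times in the resulting multiset of edges; this is exactly the matching condition for $\H$. Clearly the size is preserved, giving $\nu(\p(\H)) \le \nu(\H)$.

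The arguments are essentially bookkeeping and there is no substantial obstacle; the only point that requires care is to record that in $\p(\H)$ the vertices $i_1,\ldots,i_{a_i}$ form an independent set, which is what allows the projection to respect the weight bound $a_i$ at each vertex.
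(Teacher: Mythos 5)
Your argument is correct and is essentially the same as the paper's: both directions proceed by the projection map $\{i_s,j_t\}\mapsto\{i,j\}$ and its lifting using the $a_i$ available copies of each vertex, with the weight bound $k_i\le a_i$ guaranteeing the lift and the independence of the copies guaranteeing the projection. Your write-up merely spells out the bookkeeping a bit more explicitly than the paper does.
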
 

\begin{proof} Consider the map which sends every edge $\{u_i,v_j\}$ of $\p(\H)$ to the edge $\{u,v\}$ of $\H$. By this map, the family of the edges of $\H$ which are the images of the edges of a matching of $\p(\H)$ is a matching of $\H$ of the same cardinality. Conversely, for every matching $M$ of $\H$ we can find a matching $M^*$ of $\p(\H)$ of the same cardinality such that every edge of $M$ is the image of an edge of  $M^*$. This follows from the fact that for every vertex $i$, $a_i$ is the number of vertices of $\p(\H)$ arising from $i$ and that $a_i$ is greater than or equal to the number of appearing times of the vertex $i$ in $M$. Therefore, $\nu(\H) = \nu(\p(\H))$.
\end{proof}

\begin{Proposition}  \label{polar}
$\H$ is $t$-saturating if and only if so is $\p(\H)$.
\end{Proposition}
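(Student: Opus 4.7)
The plan is to verify the two defining conditions of $t$-saturation separately, reducing each one to Lemma \ref{matching}.

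Condition (i) is immediate: by Lemma \ref{matching}, $\nu(\H) < t$ if and only if $\nu(\p(\H)) < t$.

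For condition (ii), first I would unpack the local data at a vertex $i_k$ of $\p(\H)$. By construction the $a_i$ twins $i_1,\ldots,i_{a_i}$ are mutually non-adjacent and share the same neighborhood
$$N_{\p(\H)}(i_k) = \bigcup_{j \in N_\H(i)} \{j_1,\ldots,j_{a_j}\}.$$
Since $\p(\H)$ carries only trivial weights, this gives $\deg_{\p(\H)}(i_k) = \sum_{j\in N_\H(i)} a_j = \deg_\H(i)$. Moreover, deleting $N_{\p(\H)}(i_k)$ from $\p(\H)$ removes exactly the vertices $j_\ell$ with $j \in N_\H(i)$, while keeping every twin $i_\ell$ (they are non-adjacent to $i_k$) and every $j_\ell$ with $j \in U \setminus N_\H(i)$. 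A direct check on the edges then shows that the remaining induced subgraph is precisely $\p(\H - N_\H(i))$, so Lemma \ref{matching} applied to $\H - N_\H(i)$ yields
$$\nu(\p(\H) - N_{\p(\H)}(i_k)) = \nu(\H - N_\H(i)).$$

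Combining these identities, the inequality required at $i_k$ for $\p(\H)$,
$$\nu(\p(\H) - N_{\p(\H)}(i_k)) \ge t - \deg_{\p(\H)}(i_k),$$
becomes exactly $\nu(\H - N_\H(i)) \ge t - \deg_\H(i)$, the condition required at $i$ for $\H$. Since this reformulation does not depend on the twin index $k$ and every vertex of $\p(\H)$ arises as some $i_k$, condition (ii) for $\p(\H)$ is equivalent to condition (ii) for $\H$. There is no serious conceptual hurdle in this argument; the only step that needs real care is the identification $\p(\H) - N_{\p(\H)}(i_k) = \p(\H - N_\H(i))$, which hinges on the non-adjacency of twin vertices in the polarization.
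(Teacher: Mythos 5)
Your proposal is correct and follows essentially the same route as the paper's proof: both rest on the identities $\p(\H) - N_{\p(\H)}(i_k) = \p(\H - N_\H(i))$ and $\deg_{\p(\H)}(i_k) = \deg_\H(i)$, combined with Lemma \ref{matching}. You merely spell out in more detail the verification that the paper dismisses as "one can easily check."
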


\begin{proof}
For every vertex $i$ of $\H$, one can easily check that  $\p(\H) - N_{\p(H)}(i_t)  = \p(\H - N_\H(i))$ and $\deg_{\p(H)}(i_t) = \deg_\H(i)$ for all $t = 1,...,a_i$.  Therefore, the conclusion follows from the definition of $t$-saturating graphs and Lemma \ref{matching}.
\end{proof}

Conversely, one can obtain from a $t$-saturating simple graph a $t$-saturating weighted graph by collapsing non-adjacent vertices with the same neighborhood. \par

Let $v_1$,...,$v_r$ are non-adjacent vertices of a weighted graph $\H$ which have the same neighborhood. Let $\H'$ be a weighted graph  obtained from $\H$ by replacing $v_1$,...,$v_r$ by a single new vertex $v$ with weight $a_v := \sum_{i =1}^r a_{v_i}$ and the edges of the form $\{v_i,w\}$ by a single edge $\{v,w\}$. We call $\H'$ a {\emph{collapsed weighted graph}} of $\H$. Obviously, every weighted graph $\H$ is a collapsed weighted graph of its polarization $\p(\H)$.  

\begin{Lemma}   \label{matching collap}
Let $\H'$ be a collapsed weighted graph of $\H$. Then $\nu(\H') = \nu(\H)$. 
\end{Lemma}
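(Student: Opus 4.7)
The plan is to reduce both $\H$ and $\H'$ to a common simple graph by polarization and then invoke Lemma \ref{matching}. First I would argue that $\p(\H)$ and $\p(\H')$ are isomorphic as simple graphs. Away from the vertices being collapsed, nothing changes. Where $v_1,\ldots,v_r$ sit in $\H$, polarization produces, for each $i$, exactly $a_{v_i}$ pairwise non-adjacent copies of $v_i$ sharing a common neighborhood in $\p(\H)$, namely the polarized image of $N_\H(v_i)$. Since by hypothesis the $v_i$ all have the same neighborhood in $\H$, these copies coming from different indices share the same neighborhood as well; moreover, because $v_i$ and $v_j$ are non-adjacent in $\H$ for $i\ne j$, the corresponding copies are non-adjacent in $\p(\H)$ too. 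Altogether, in place of $v_1,\ldots,v_r$, the graph $\p(\H)$ contains $\sum_{i=1}^r a_{v_i} = a_v$ pairwise non-adjacent vertices with a common neighborhood, which is precisely the local structure produced by polarizing the single weight-$a_v$ vertex $v$ of $\H'$.

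Once the identification $\p(\H) \cong \p(\H')$ is in hand, two applications of Lemma \ref{matching} give $\nu(\H) = \nu(\p(\H)) = \nu(\p(\H')) = \nu(\H')$, and we are done.

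Alternatively, I could bypass polarization and give a direct capacity-transfer argument. Any matching $M$ of $\H$ maps to a family of edges of $\H'$ by relabelling every edge $\{v_i,w\}$ as $\{v,w\}$; the new multiplicity of $v$ is the sum of the multiplicities of the $v_i$ in $M$, which is at most $\sum_i a_{v_i} = a_v$, so the result is a matching of $\H'$ of the same cardinality. Conversely, given a matching $M'$ of $\H'$, the vertex $v$ appears at most $a_v = \sum_i a_{v_i}$ times, so one can redistribute those appearances among $v_1,\ldots,v_r$ with each $v_i$ used at most $a_{v_i}$ times; relabelling the corresponding edges $\{v,w\}$ as $\{v_i,w\}$ yields a matching of $\H$ of the same cardinality.

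The argument is essentially mechanical; no serious obstacle appears. The only point that must not be skipped is the verification that in $\p(\H)$ the copies arising from different $v_i$ are pairwise non-adjacent, which uses exactly the hypothesis that $v_1,\ldots,v_r$ are non-adjacent in $\H$. Without that hypothesis the two polarizations would differ (extra edges among the copies on the $\H$-side), and the identity would fail.
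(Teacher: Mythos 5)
Your proposal is correct, and in fact it contains two valid proofs. Your ``alternative'' direct capacity-transfer argument is precisely the paper's own proof: the authors define the map sending each edge $\{v_i,w\}$ of $\H$ to $\{v,w\}$ of $\H'$ (and fixing the other edges) and then transfer matchings in both directions exactly as in the proof of Lemma~\ref{matching}, using $a_v=\sum_i a_{v_i}$ to check the multiplicity constraint at $v$ and to redistribute appearances of $v$ among the $v_i$. Your primary argument, via the isomorphism $\p(\H)\cong\p(\H')$ followed by two applications of Lemma~\ref{matching}, is a genuinely different (if closely related) route: it trades the direct bookkeeping for a one-time verification that collapsing commutes with polarization, which is where the hypotheses that the $v_i$ are pairwise non-adjacent and share a common neighborhood enter. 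That route is slightly more conceptual and reuses the already-proved lemma as a black box; the paper's route is shorter on the page because it can simply point back to the earlier proof. You are also right to flag the non-adjacency check as the one step that cannot be skipped in the polarization version --- without it $\p(\H)$ would acquire edges among the copies that $\p(\H')$ lacks. Either write-up would be acceptable.
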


\begin{proof}
We consider the map which sends every edge of the form $\{v_i,w\}$ of $\H$, $i = 1,...,r$, to the edge $\{v,w\}$ of $\H'$ and other edges of $\H$ to the same edge of $\H'$. Using this map we obtain the assertion as in the proof of Lemma \ref{matching}.  
\end{proof}
 
\begin{Proposition}  
Let $\H'$ be a collapsed weighted graph of $\H$. Then $\H$ is $t$-saturating if and only if so is $\H'$.
\end{Proposition}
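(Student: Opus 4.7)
The plan is to reduce the statement to Proposition \ref{polar} by showing that polarization "eats" the collapse, i.e.\ that $\p(\H) = \p(\H')$ as simple graphs. Once this identification is in hand the proof is immediate: by Proposition \ref{polar}, $\H$ is $t$-saturating iff $\p(\H)$ is, and $\H'$ is $t$-saturating iff $\p(\H')$ is, so the two equivalences coincide.

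For this key step I would describe both polarizations explicitly. Let $v_1,\dots,v_r$ be the non-adjacent vertices of $\H$ with common neighborhood $N \subseteq U \setminus \{v_1,\dots,v_r\}$ that are collapsed to form the single vertex $v$ of $\H'$ with weight $a_v = a_{v_1}+\cdots+a_{v_r}$. All vertices of $\H$ other than $v_1,\dots,v_r$ are also vertices of $\H'$ with the same weight and the same adjacencies in $U\setminus\{v_1,\dots,v_r\}$, while an edge $\{w,v_j\}$ of $\H$ becomes the edge $\{w,v\}$ of $\H'$. Polarizing $\H$, the vertices $v_1,\dots,v_r$ contribute in total $a_v$ new vertices; because the $v_j$ are mutually non-adjacent and share the neighborhood $N$, these $a_v$ new vertices are pairwise non-adjacent and each adjacent to every vertex of $\p(N)$. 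Polarizing $\H'$, the single vertex $v$ also contributes $a_v$ pairwise non-adjacent vertices each adjacent to every vertex of $\p(N)$. Outside these blocks the two polarizations coincide, so up to relabeling of the new vertices $\p(\H) = \p(\H')$.

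Invoking Proposition \ref{polar} then finishes the proof by the chain of equivalences $\H\text{ is }t\text{-saturating} \Longleftrightarrow \p(\H)\text{ is }t\text{-saturating} \Longleftrightarrow \p(\H')\text{ is }t\text{-saturating} \Longleftrightarrow \H'\text{ is }t\text{-saturating}$. The main obstacle is purely notational: one has to check carefully that the local picture around the collapsed block matches in both polarizations, in particular that the new vertices produced from $v_1,\dots,v_r$ have the same adjacency pattern as those produced from $v$. No fresh matching count or saturation argument is required, since Lemma \ref{matching collap} and Proposition \ref{polar} have already absorbed those.
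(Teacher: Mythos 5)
Your proof is correct, but it follows a different route from the paper's. The paper's own argument is a direct analogue of the proof of Proposition \ref{polar}: one checks from the definition of a collapsed weighted graph that deleting closed neighborhoods and taking degrees are compatible with the collapse (e.g.\ $N_{\H'}(v)=N_\H(v_j)$ and $\deg_{\H'}(v)=\deg_\H(v_j)$ for each collapsed vertex $v_j$, and $\H'-N_{\H'}(w)$ is a collapsed graph of $\H-N_\H(w)$ for the other vertices $w$), and then invokes Lemma \ref{matching collap} to equate the relevant matching numbers. You instead observe that $\p(\H)$ and $\p(\H')$ coincide up to relabeling --- the $a_{v_1}+\cdots+a_{v_r}$ polarized copies of $v_1,\dots,v_r$ are pairwise non-adjacent with common neighborhood $\p(N)$, exactly like the $a_v$ copies of $v$ --- and then sandwich the statement between two applications of Proposition \ref{polar}. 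Your identification $\p(\H)=\p(\H')$ is easily verified and the resulting chain of equivalences is valid; it even renders Lemma \ref{matching collap} unnecessary for this proposition (that lemma would itself follow from Lemma \ref{matching} via the same identification). What the paper's route buys is uniformity with the proof of Proposition \ref{polar} and a matching-number statement (Lemma \ref{matching collap}) that is recorded for independent use; what your route buys is that all the neighborhood and degree bookkeeping is done only once, at the level of polarization.
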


\begin{proof}
Similar to the proof of Proposition \ref{polar}, the conclusion follows from the definition of collapsed weighted graphs and Lemma \ref{matching collap}.
\end{proof}

By definition, a $t$-saturating weighted graph $\H$ is $(t-1)$-saturating if $\nu(\H) < t-1$.
Therefore, we may concentrate our investigation on $t$-saturating weighted graphs with matching number $t-1$.
We will present below a property of $t$-saturating simple graphs with matching number $t-1$, which will be useful in our investigation. \par

Let $\G$ be a simple graph on the vertex set $V = \{1,...,n\}$. Let $M$ be an arbitrary matching of $\G$.
A path or a cycle $P$ in $\G$ is called $M$-augmenting if $P$ starts and ends on vertices not in $M$, and alternates between edges in and not in $M$. Note that for a $M$-augmenting cycle, there is only an endpoint and there
are two adjacent edges not in $M$ meeting at the endpoint. It is clear that an augmenting path is always odd. Recall that $M$ is a maximum matching if $|M| = \nu(\G)$. \par
 
\begin{Lemma} \label{berge} 
Let $\G$ be a $t$-saturating graph with $\nu(\G)=t-1$ and $M$ a maximum matching of $\G$. 
For every vertex $i$ not in $M$, there is an $M$-augmenting cycle $P_i$ which starts and ends on $i$. 
Moreover, for every pair of vertices $j \neq i$ in $M$, two such cycles $P_i$ and $P_j$ are disjoint.
\end{Lemma}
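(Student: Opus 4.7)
The plan is to deduce existence of the cycle $P_i$ from hypothesis (ii) together with Berge's theorem, and then to establish disjointness by a ``first intersection'' switching argument that would otherwise manufacture a forbidden $M$-augmenting path between two $M$-unsaturated vertices.

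\emph{Existence of $P_i$.} Given $i\notin V(M)$, maximality of $M$ forces every neighbor of $i$ to be $M$-saturated (otherwise $M\cup\{\{i,v\}\}$ would be a larger matching). Writing $d=\deg_\G(i)$, the restriction $M':=\{e\in M:e\cap N_\G(i)=\emptyset\}$ is a matching of $\G-N_\G(i)$ of size exactly $(t-1)-d$, and condition (ii) of $t$-saturating gives $\nu(\G-N_\G(i))\ge t-d>|M'|$. Berge's theorem then supplies an $M'$-augmenting path $Q$ in $\G-N_\G(i)$. Since $i$ is isolated in $\G-N_\G(i)$, neither endpoint of $Q$ is $i$; each endpoint of $Q$ is therefore either $M$-unsaturated in $\G$ or \emph{orphaned}, i.e., matched by $M$ to some vertex of $N_\G(i)$. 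If some endpoint were $M$-unsaturated, then $Q$ itself (if both endpoints are) or $Q$ extended through the orphaned endpoint to $i$ (via the $M$-edge into $N_\G(i)$ followed by a non-$M$ edge to $i$) would be an $M$-augmenting path of $\G$, contradicting maximality. Hence both endpoints $u,w$ of $Q$ are orphaned, matched in $M$ to distinct $u',w'\in N_\G(i)$, and
\[
P_i\;:=\;i,\,u',\,u,\,Q,\,w,\,w',\,i
\]
is the desired $M$-augmenting cycle through $i$.

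\emph{Disjointness.} Every non-starting vertex of an $M$-augmenting cycle is $M$-matched (the cycle uses its $M$-edge), so automatically $j\notin V(P_i)$ and $i\notin V(P_j)$ for distinct $M$-unsaturated $i,j$. Suppose for contradiction that $V(P_i)\cap V(P_j)\ne\emptyset$, and let $v$ be the first vertex of $V(P_j)$ encountered while traversing $P_i$ from $i$. At the matched vertex $v$, the two edges of $P_j$ incident to $v$ are of opposite types (exactly one is the unique $M$-edge at $v$), so I may orient $P_j$ from $v$ towards $j$ so that its first edge has type complementary to the last edge of the $P_i$-initial segment ending at $v$. Concatenating these two segments yields an $M$-alternating walk from $i$ to $j$; it is in fact a simple path, because the initial segment from $i$ avoids $V(P_j)$ except at $v$ while the rest lies inside $V(P_j)\setminus\{v\}$. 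The walk starts and ends with non-$M$ edges at the $M$-unsaturated vertices $i$ and $j$, so it is an $M$-augmenting path in $\G$, contradicting the maximality of $M$.

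\emph{Main obstacle.} The existence of $P_i$ is a direct consequence of condition (ii) together with Berge's theorem and is largely bookkeeping. The subtle point is the disjointness: in general, an alternating walk between two unsaturated vertices need not shortcut to an augmenting path (this is the blossom obstruction in non-bipartite graphs). The argument above sidesteps this precisely because the ``first-intersection'' choice of $v$ guarantees the concatenation is already a simple path, and the alternation at $v$ can always be matched thanks to the two edges of $P_j$ at any matched interior vertex having opposite types.
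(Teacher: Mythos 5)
Your existence argument has a genuine gap. You assert that $M'=\{e\in M:\ e\cap N_\G(i)=\emptyset\}$ has size \emph{exactly} $(t-1)-d$, and the application of Berge's theorem rests entirely on the resulting strict inequality $\nu(\G-N_\G(i))\ge t-d>|M'|$. But $|M'|=(t-1)-k$, where $k$ is the number of edges of $M$ meeting $N_\G(i)$, and $k=d$ requires not only that every vertex of $N_\G(i)$ be $M$-saturated (which you do prove) but also that no edge of $M$ have \emph{both} endpoints in $N_\G(i)$. If some $\{u,v\}\in M$ lies inside $N_\G(i)$, then $k\le d-1$, so $|M'|\ge t-d$, which is not below the lower bound $\nu(\G-N_\G(i))\ge t-d$, and $M'$ may well be a maximum matching of $\G-N_\G(i)$; Berge then yields nothing. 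This is not a vacuous worry even for the case $t=4$ used in this paper: take $\G$ to be the disjoint union of a triangle $\{i,u,v\}$ and a $K_4$ (Type 7 of Table 2), with $M$ consisting of $\{u,v\}$ and two disjoint edges of the $K_4$. Here $d=2$, $M'$ is the pair of $K_4$-edges, and $|M'|=2=\nu(\G-N_\G(i))$. The lemma still holds there because $i,u,v$ is itself an $M$-augmenting triangle, and that observation is exactly the patch your proof needs: first dispose of the case where some $M$-edge lies inside $N_\G(i)$ (which immediately produces an augmenting triangle through $i$), and only then run the counting argument, which is correct in the remaining case. With that repair the rest of your construction of $P_i$ (endpoints of $Q$ must both be ``orphaned'', matched to distinct $u',w'\in N_\G(i)$, and the closing through $i$) is sound.

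Your disjointness argument is correct and is essentially identical to the paper's: first point of intersection $v$, the observation that $v$ carries one $M$-edge and one non-$M$ edge of $P_j$ so the arc from $v$ to $j$ can be chosen with either starting parity, and concatenation into a simple $M$-augmenting path from $i$ to $j$, contradicting Berge. For the existence half the paper takes a different route that avoids your difficulty altogether: it splits $i$ into $d=\deg_\G(i)$ non-adjacent clones $i_1,\dots,i_d$ with the same neighborhood, notes that the $d$ disjoint edges between the clones and $N_\G(i)$ together with condition (ii) force the enlarged graph to have matching number at least $t$, applies Berge \emph{there} to the matching $M$, and argues that the resulting augmenting path must begin and end at clones, hence collapses to the desired $M$-augmenting odd cycle in $\G$. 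That construction never needs to count how $M$ sits relative to $N_\G(i)$, which is precisely where your version breaks.
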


\begin{proof} First, we construct a new simple graph $\G_i^\prime$ from $\G$ by replacing the vertex $i$ by $d:=\deg_\G(i)$ new vertices $i_1, i_2, ...,i_d$, and replacing every edges $\{i,j\}$ by $d$ edges $\{i_1,j\}, \{i_2,j\}, ...,\{i_d,j\}.$ The induced subgraph of $\G_i^\prime$ on the set $N_\G(i) \cup \{i_1,...,i_d\}$ contains $d$ disjoint edges. Adding these $d$ edges to any matching of $\G_i^\prime - N_{\G_i^\prime}(i)$ we obtain a matching of $\G_i^\prime$. Therefore, $\nu(\G_i^\prime) \ge \nu(\G' - N_\G(i)) + d.$ 
Since $\G-N_\G(i) = \G_i^\prime - N_\G(i)$, this implies $\nu(\G_i^\prime) \ge \nu(\G - N_\G(i)) + \deg_\G(i) \ge t$.\par 

By identifying $i$ with $i_1$ we can consider $\G$ as a subgraph of $\G_i^\prime$ and $M$ as a matching of $\G_i^\prime.$ Since $M$ contains $t-1$ edges while $\nu(\G_i^\prime) \ge t$, $M$ is not a maximum matching of $\G_i^\prime$. Due to Berge's Lemma \cite{Be}, there is an $M$-augmenting-path $P$ in $\G_i^\prime$. The edges of $P$ not in $M$ form a matching $M'$ of $\G_i^\prime$ with more edges than $M$. Therefore, $M'$ is not a matching of $\G$. From this it follows that the end vertices of $P$ must belong to $\{i_1,...,i_d\}$. Since $i$ is not in $M$, the other vertices of $P$ do not belong to  $\{i_1,...,i_d\}$. Thus, if we replace $i_1,...,i_s$ by $i$ we obtain from $P$ an $M$-augmenting cycle $P_i$ in $\G$ which starts and ends at $i$. \par 

Assume that there exist two different vertices $i$ and $j$ not in $M$ such that there are two such $M$-augmenting cycles $P_i$ and $P_j$ which are not disjoint. Let $v$ be the first vertex of $P_i$ (as a closed path starting from $i$) which belongs to $P_j$. Then $v$ is in $M$. If the path from $i$ to $v$ on $P_i$ ends with an edge in (or not in)  $M$, we can find a path from $v$ to $j$ on $P_j$  which starts with an edge not in (or in) $M$. By connecting the two paths we obtain an $M$-augmenting path from $i$ to $j$. By Berge's Lemma, this is a contradiction to the assumption that $M$ is a maximum matching. Therefore, we can conclude that $P_i$ and $P_j$ are always disjoint if $i \neq j$.
\end{proof}

%%%%%%%%%%%%%%%%%%%%%%%%%

\section{Classification of $4$-saturating weighted graphs}

In this section, we will describe all $4$-saturating weighted graphs.  By definition, a weighted graph $\H$ on the vertex set $V$ is $4$-saturating if the following conditions are satisfied:\par 
{\rm (i) } $\nu(\G) < 4$, \par
{\rm (ii)} $\nu(\G - N_\G(i)) \ge 4 - \deg_\G(i)$ for all $i \in V$. \par
If, in addition, $\nu(\G) < 3$ then $\G$ is also 3-saturating. The 3-saturating weighted graphs have been classified in \cite{HLT}. 

%\begin{figure}
\begin{center}
\begin{picture}(0,0)%
\includegraphics{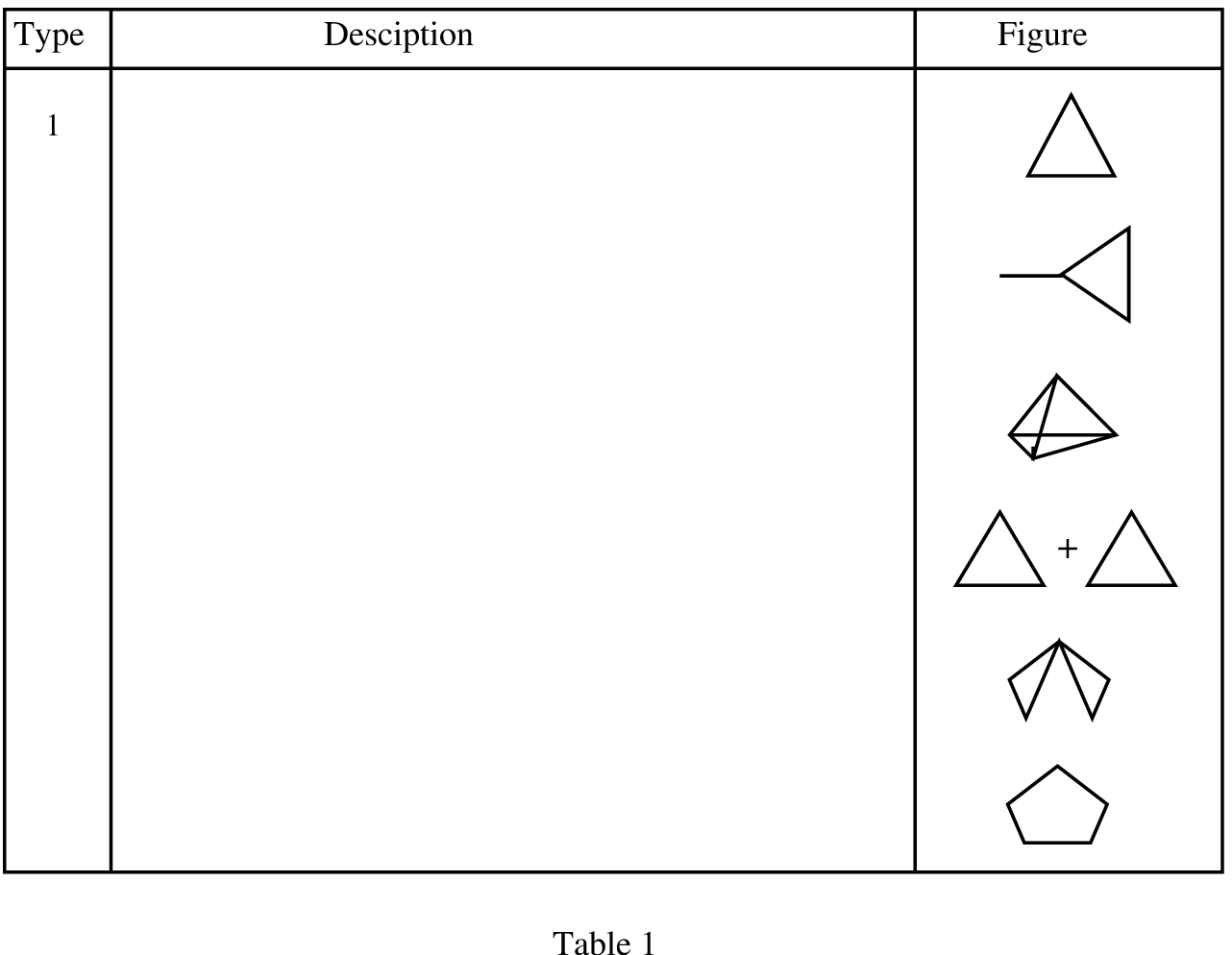}%
\end{picture}%
\setlength{\unitlength}{4144sp}%
\begingroup\makeatletter\ifx\SetFigFont\undefined
% extract first six characters in \fmtname
\def\x#1#2#3#4#5#6#7\relax{\def\x{#1#2#3#4#5#6}}%
\expandafter\x\fmtname xxxxxx\relax \def\y{splain}%
\ifx\x\y   % LaTeX or SliTeX?
\gdef\SetFigFont#1#2#3{%
  \ifnum #1<17\tiny\else \ifnum #1<20\small\else
  \ifnum #1<24\normalsize\else \ifnum #1<29\large\else
  \ifnum #1<34\Large\else \ifnum #1<41\LARGE\else
     \huge\fi\fi\fi\fi\fi\fi
  \csname #3\endcsname}%
\else
\gdef\SetFigFont#1#2#3{\begingroup
  \count@#1\relax \ifnum 25<\count@\count@25\fi
  \def\x{\endgroup\@setsize\SetFigFont{#2pt}}%
  \expandafter\x
    \csname \romannumeral\the\count@ pt\expandafter\endcsname
    \csname @\romannumeral\the\count@ pt\endcsname
  \csname #3\endcsname}%
\fi
\fi\endgroup
\begin{picture}(5838,4607)(372,-4186)
\put(1126,-3436){\makebox(0,0)[lb]{\smash{\SetFigFont{10}{12.0}{rm}$\Omega$ is spanned by a pentagon}}}
\put(1126,-2836){\makebox(0,0)[lb]{\smash{\SetFigFont{10}{12.0}{rm}$\Omega$ is spanned by two triangles meeting at a vertex}}}
\put(588,-2256){\makebox(0,0)[lb]{\smash{\SetFigFont{10}{12.0}{rm}4}}}
\put(1126,-2256){\makebox(0,0)[lb]{\smash{\SetFigFont{10}{12.0}{rm}$\Omega$ is a disjoint union of 2 triangles}}}
\put(588,-1636){\makebox(0,0)[lb]{\smash{\SetFigFont{10}{12.0}{rm}3}}}
\put(588,-986){\makebox(0,0)[lb]{\smash{\SetFigFont{10}{12.0}{rm}2}}}
\put(1126,-1636){\makebox(0,0)[lb]{\smash{\SetFigFont{10}{12.0}{rm}$\Omega$ is a $K_4$}}}
\put(1126,-1129){\makebox(0,0)[lb]{\smash{\SetFigFont{10}{12.0}{rm}a vertex of weight 2}}}
\put(1126,-904){\makebox(0,0)[lb]{\smash{\SetFigFont{10}{12.0}{rm}$\Omega$ is spanned by a triangle and an edge meeting at}}}
\put(1126,-229){\makebox(0,0)[lb]{\smash{\SetFigFont{10}{12.0}{rm}$\Omega$ is a triangle with weight $(2,2,1)$}}}
\put(5401,-1113){\makebox(0,0)[lb]{\smash{\SetFigFont{10}{12.0}{rm}$2$}}}
\put(5151,-455){\makebox(0,0)[lb]{\smash{\SetFigFont{10}{12.0}{rm}$2$}}}
\put(5696,-455){\makebox(0,0)[lb]{\smash{\SetFigFont{10}{12.0}{rm}$2$}}}
\put(588,-2836){\makebox(0,0)[lb]{\smash{\SetFigFont{10}{12.0}{rm}5}}}
\put(588,-3486){\makebox(0,0)[lb]{\smash{\SetFigFont{10}{12.0}{rm}6}}}
\end{picture}

\end{center}
%\end{figure}

\begin{Theorem} \label{3-saturating} \cite[Theorem 4.3]{HLT}
$\H$ is 3-saturating if and only if $\H$ belongs to the types of Table 1.
\end{Theorem}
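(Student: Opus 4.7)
The plan is to first reduce to simple graphs via polarization and then carry out a finite case analysis using the augmenting-cycle lemma. By Proposition \ref{polar}, $\H$ is 3-saturating iff its polarization $\p(\H)$ is 3-saturating, and conversely every weighted graph is obtained from its polarization by collapsing non-adjacent twins with a common neighborhood. So it suffices to classify 3-saturating simple graphs up to such collapsing and match the equivalence classes to the six entries of Table~1.

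Let $\G$ be a 3-saturating simple graph. Condition (i) forces $\nu(\G) \le 2$. The cases $\nu(\G)=0,1$ are ruled out by applying condition (ii) to a vertex of small degree: the inequality $\nu(\G-N_\G(i)) \ge 3 - \deg_\G(i)$ quickly produces matchings of size $\ge 2$ in induced subgraphs of $\G$, contradicting $\nu(\G)\le 1$. So $\nu(\G)=2$; fix a maximum matching $M=\{\{a,b\},\{c,d\}\}$ and put $V_0 := V(\G)\setminus\{a,b,c,d\}$. By Lemma \ref{berge}, each $v\in V_0$ admits an $M$-augmenting cycle $P_v$ based at $v$ and, for $v\ne v'$, the cycles $P_v$ and $P_{v'}$ are vertex-disjoint. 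An $M$-augmenting cycle has odd length with two out-of-$M$ edges meeting at its basepoint, so its only possible shapes are a triangle using exactly one edge of $M$ or a pentagon using both edges of $M$.

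Disjointness together with $|\{a,b,c,d\}|=4$ forces $|V_0|\le 2$, yielding four subcases. If $|V_0|=0$, then $V(\G)=\{a,b,c,d\}$ and condition (ii) at each vertex forces $\G=K_4$, giving type 3. If $|V_0|=1$ with $P_v$ a pentagon, then $\G$ contains a 5-cycle on $\{v,a,b,c,d\}$, and the matching bound $\nu(\G)=2$ together with (ii) excludes every additional edge, giving type 6. If $|V_0|=1$ with $P_v$ a triangle, the skeleton is a triangle on $\{v,a,b\}$ together with the edge $\{c,d\}$, plus some connection between $\{v,a,b\}$ and $\{c,d\}$ forced by (ii) at $c,d$; sweeping over all edge extensions compatible with $\nu(\G)=2$ and (ii), and then collapsing twin vertices, yields types 1, 2 and 5. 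Finally, if $|V_0|=2$, both augmenting cycles must be disjoint triangles on $\{v_1,a,b\}$ and $\{v_2,c,d\}$; any further edge between them would produce an $M$-augmenting path and hence $\nu(\G)\ge 3$, so $\G$ is the disjoint union of two triangles, type 4. The reverse direction — that each of the six families is 3-saturating — is a direct computation of $\nu(\H)$ and $\nu(\H-N_\H(i))$ for each type.

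The main obstacle will be the $|V_0|=1$-triangle case together with the depolarization step. Once the triangle skeleton is fixed, many edge extensions must be considered (further edges inside $M$, edges between the triangle and the remaining matching edge, diagonals creating a $K_4$, etc.), and each candidate must be tested against the matching bound $\nu(\G)<3$ and against condition (ii) at every vertex. One then has to recognize when two such simple graphs differ only by splitting a single vertex into non-adjacent twins, so that they collapse to the same weighted graph in Table~1. This combinatorial sweep, rather than any single conceptual ingredient, is the part that requires the most care to ensure no spurious family is missed and no listed family is counted twice.
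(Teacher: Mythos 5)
First, note that the paper does not actually prove this statement: it is quoted verbatim from \cite[Theorem 4.3]{HLT}, so there is no in-paper proof to compare against. Your strategy --- polarize to reduce to simple graphs, fix a maximum matching $M$ of size $\nu=t-1=2$, invoke Lemma \ref{berge} to attach pairwise disjoint $M$-augmenting odd cycles to the uncovered vertices, and then case on their number and shape before collapsing twins --- is exactly the architecture this paper uses for the $t=4$ analogue (Theorems \ref{squarefree} and \ref{4-sat weighted graphs}), so the overall plan is sound and the bound $|V_0|\le 2$ is correctly derived from disjointness.

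There are, however, two genuine gaps. The first is a concrete false step: in the subcase ``$|V_0|=1$ with $P_v$ a pentagon'' you assert that $\nu(\G)=2$ together with condition (ii) ``excludes every additional edge.'' This is wrong: any $5$-vertex graph has $\nu\le 2$ automatically, and adding chords to a spanning pentagon only makes condition (ii) easier to satisfy, so chorded pentagons such as $K_{2,2,1}$, or the graph on $\{x,y,z_1,z_2,w\}$ with edges $xy$, $xz_i$, $yz_i$, $wz_i$ ($i=1,2$), are $3$-saturating. These chorded pentagons are not incidental --- they are precisely the polarizations of the weighted Types 1 and 2 of Table 1, and for some choices of the maximum matching $M$ the unique uncovered vertex admits only an augmenting pentagon and no augmenting triangle, so your triangle subcase cannot be relied upon to recover them. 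As written, your pentagon subcase would discard exactly the graphs that generate two of the six entries of the table. The second gap is that the load-bearing portion of the argument --- the sweep over edge extensions in the triangle subcase, and the determination of which $5$- and $6$-vertex simple $3$-saturating graphs collapse to which weighted entries --- is asserted rather than carried out; since the entire content of the theorem is that this finite check closes up into exactly the six listed families, the proof is not complete without executing it.
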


From this classification we can easily see which 3-saturating weighted graphs are 4-saturating. 

\begin{Corollary} \label{nu < 3} 
Let $\H$ be a 4-saturating weighted graph with $\nu(\H) < 3$.  Then $\H$ is the complete graph $K_5$.
\end{Corollary}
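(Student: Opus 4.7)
My plan is to leverage the classification of $3$-saturating weighted graphs from Theorem~\ref{3-saturating}, and then eliminate all six of its families except $K_5$ using the sharper inequality coming from $4$-saturation.

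First I would observe that $\H$ is automatically $3$-saturating: condition (i) for $t=3$ is exactly the hypothesis $\nu(\H)<3$, and condition (ii) for $t=3$ is weaker than its $t=4$ counterpart since $3-\deg_\H(i)\le 4-\deg_\H(i)$. By Theorem~\ref{3-saturating}, $\H$ belongs to one of the six types in Table~1. Before going to cases, I would record a uniform consequence: since $\nu(\H-N_\H(i))\le\nu(\H)\le 2$, condition (ii) at $t=4$ forces $\deg_\H(i)\ge 2$ for every vertex, and when $\deg_\H(i)\in\{2,3\}$ it imposes $\nu(\H-N_\H(i))\ge 2$ or $\ge 1$, respectively, on the induced weighted subgraph $\H-N_\H(i)$.

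Next I would inspect Types~1 through 5 and exhibit in each case a specific vertex $i$ for which $\H-N_\H(i)$ falls short of the required matching number. For $K_4$ (Type~3) any vertex has $\deg_\H(i)=3$ and $\H-N_\H(i)=\{i\}$, so $\nu=0<1$; for two disjoint triangles (Type~4) a triangle vertex has $\deg_\H(i)=2$ and $\H-N_\H(i)$ is a triangle plus an isolated vertex, giving $\nu=1<2$; analogous one-line checks dispose of Types~1, 2, and 5, always by exhibiting a vertex of small weighted degree after whose neighbourhood is removed too little of $\H$ survives.

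Finally I would treat Type~6, whose base is a $5$-vertex graph containing a pentagon as a spanning subgraph. Here I would push the degree argument further: if some vertex $i$ had $\deg_\H(i)\le 3$, then $\H-N_\H(i)$ would consist of at most two vertices besides the now-isolated $i$, forcing $\nu(\H-N_\H(i))\le 1$, while the required bound is $4-\deg_\H(i)\ge 1$. Hence $\deg_\H(i)\ge 4$ at every vertex, which since the base has only five vertices forces the base to be $K_5$. No weight can exceed $1$, because assigning weight $\ge 2$ to any vertex $v$ would permit a matching $\{v,i\},\{v,j\},\{k,\ell\}$ of size three (with $\{i,j,k,\ell\}$ the remaining four vertices), contradicting $\nu(\H)<3$. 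The main obstacle is this Type~6 step, which a priori covers many super-graphs of $C_5$ together with arbitrary weight patterns; the uniform degree bound above is what pins everything down to $K_5$ with trivial weights.
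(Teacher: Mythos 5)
Your overall strategy is exactly the paper's: note that $\nu(\H)<3$ together with $4$-saturation makes $\H$ automatically $3$-saturating, invoke the classification of Theorem~\ref{3-saturating}, kill the small types by condition (ii), and finish the five-vertex types with a degree argument forcing $K_5$. The paper's own proof is just a condensed version of this, and your verifications of Types 1--4 and of Type 6 are sound (your closing remark about weights is not even needed, since Types 3--6 of Table 1 carry trivial weights).

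There is, however, one slip you should repair: the claim that ``analogous one-line checks dispose of \dots Type 5.'' Type 5 of Table 1 is a graph \emph{spanned by} two triangles meeting at a vertex, i.e.\ any $5$-vertex supergraph of the bowtie; the types of Table 1 are not mutually exclusive, and $K_5$ itself is of Type 5. So there is no vertex of small weighted degree to exhibit in general, and indeed a Type 5 graph \emph{can} be $4$-saturating --- precisely when it is $K_5$. As written, your Type 5 step would (falsely) conclude that no Type 5 graph survives, contradicting your own Type 6 conclusion. The fix is immediate and is what the paper does: treat Types 5 and 6 together as ``$5$-vertex simple graphs'' and run your degree argument (every vertex must have degree $4$, since a vertex of degree $\le 3$ leaves itself isolated in $\H-N_\H(i)$ with too few remaining vertices to meet $\nu(\H-N_\H(i))\ge 4-\deg_\H(i)$), which yields $K_5$ in both cases. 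Only the bare bowtie, which is Type 5 but not Type 6, genuinely dies by your one-line check.
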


\begin{proof} 
It is easy to check that $\H$ is not of Types 1-4 of Table 1 because the above condition (ii) is not satisfied. Thus, $\H$ is a simple graph spanned by a pentagon, or by two triangles meeting at a vertex. For such a graph, condition (ii) is satisfied if and if $\deg_\H(i) = 5$ for all vertices $i$. This means that $\H$ is the complete graph $K_5$. 
\end{proof}

Now, we will classify all 4-saturating simple graphs. 

\begin{figure}
\begin{center}
\begin{picture}(0,0)%
\includegraphics{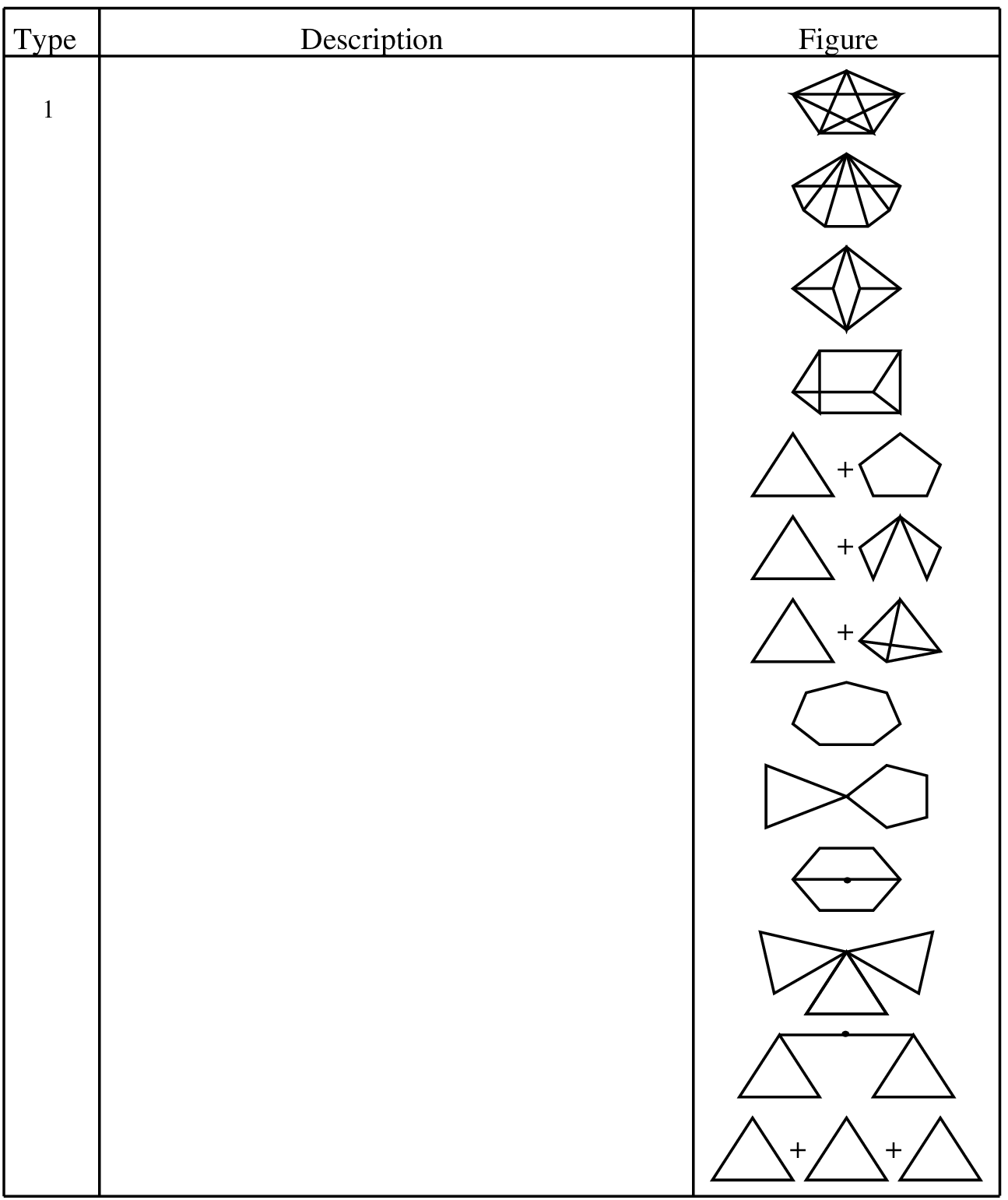}%
\end{picture}%
\setlength{\unitlength}{4144sp}%
\begingroup\makeatletter\ifx\SetFigFont\undefined
% extract first six characters in \fmtname
\def\x#1#2#3#4#5#6#7\relax{\def\x{#1#2#3#4#5#6}}%
\expandafter\x\fmtname xxxxxx\relax \def\y{splain}%
\ifx\x\y   % LaTeX or SliTeX?
\gdef\SetFigFont#1#2#3{%
  \ifnum #1<17\tiny\else \ifnum #1<20\small\else
  \ifnum #1<24\normalsize\else \ifnum #1<29\large\else
  \ifnum #1<34\Large\else \ifnum #1<41\LARGE\else
     \huge\fi\fi\fi\fi\fi\fi
  \csname #3\endcsname}%
\else
\gdef\SetFigFont#1#2#3{\begingroup
  \count@#1\relax \ifnum 25<\count@\count@25\fi
  \def\x{\endgroup\@setsize\SetFigFont{#2pt}}%
  \expandafter\x
    \csname \romannumeral\the\count@ pt\expandafter\endcsname
    \csname @\romannumeral\the\count@ pt\endcsname
  \csname #3\endcsname}%
\fi
\fi\endgroup
\begin{picture}(5894,7019)(654,-6608)
\put(3250,-6800){\makebox(0,0)[lb]{\smash{\SetFigFont{10}{12.0}{rm}Table 2}}}
\put(1350,-6307){\makebox(0,0)[lb]{\smash{\SetFigFont{10}{12.0}{rm}$\Omega$ is a disjoint union of 3 triangles}}}

\put(1350,-5940){\makebox(0,0)[lb]{\smash{\SetFigFont{10}{12.0}{rm}a path of length 2}}}
\put(1350,-5757){\makebox(0,0)[lb]{\smash{\SetFigFont{10}{12.0}{rm}$\Omega$ is spanned by 2 triangles connected by }}}

\put(1350,-5390){\makebox(0,0)[lb]{\smash{\SetFigFont{10}{12.0}{rm}at a vertex}}}
\put(1350,-5150){\makebox(0,0)[lb]{\smash{\SetFigFont{10}{12.0}{rm}$\Omega$ is spanned by a union of 3 triangles meeting }}}

\put(1350,-4840){\makebox(0,0)[lb]{\smash{\SetFigFont{10}{12.0}{rm}sharing a path of length 2}}}
\put(1350,-4656){\makebox(0,0)[lb]{\smash{\SetFigFont{10}{12.0}{rm}$\Omega$ is spanned by a union of 2 pentagons }}}

\put(1350,-4381){\makebox(0,0)[lb]{\smash{\SetFigFont{10}{12.0}{rm}pentagon meeting at a vertex}}}
\put(1350,-4198){\makebox(0,0)[lb]{\smash{\SetFigFont{10}{12.0}{rm}$\Omega$ is spanned by a union of a triangle and a}}}

\put(1350,-3774){\makebox(0,0)[lb]{\smash{\SetFigFont{10}{12.0}{rm}$\Omega$ is spanned by a $C_7$}}}

\put(1350,-3316){\makebox(0,0)[lb]{\smash{\SetFigFont{10}{12.0}{rm}$\Omega$ is a disjoint union of a triangle and a $K_4$}}}

\put(1350,-2970){\makebox(0,0)[lb]{\smash{\SetFigFont{10}{12.0}{rm}spanned by 2 triangles meeting at a vertex}}}
\put(1350,-2730){\makebox(0,0)[lb]{\smash{\SetFigFont{10}{12.0}{rm}$\Omega$ is a disjoint union of a triangle and a graph}}}

\put(1406,-2419){\makebox(0,0)[lb]{\smash{\SetFigFont{10}{12.0}{rm}spanned by a $C_5$}}}
\put(1350,-2180){\makebox(0,0)[lb]{\smash{\SetFigFont{10}{12.0}{rm}$\Omega$ is a disjoint union of a triangle and a graph}}}

\put(1350,-1813){\makebox(0,0)[lb]{\smash{\SetFigFont{10}{12.0}{rm}$\Omega$ is spanned by a triangular prism}}}

\put(1350,-1263){\makebox(0,0)[lb]{\smash{\SetFigFont{10}{12.0}{rm}$\Omega$ is spanned by a union of 4 triangles}}}

\put(1350,-748){\makebox(0,0)[lb]{\smash{\SetFigFont{10}{12.0}{rm}$\Omega$ is spanned by a cone on $C_5$}}}

\put(1350,-197){\makebox(0,0)[lb]{\smash{\SetFigFont{10}{12.0}{rm}$\Omega$ is a $K_5$}}}

\put(957,-6307){\makebox(0,0)[lb]{\smash{\SetFigFont{10}{12.0}{rm}13}}}
\put(900,-5757){\makebox(0,0)[lb]{\smash{\SetFigFont{10}{12.0}{rm}12}}}

\put(956,-5242){\makebox(0,0)[lb]{\smash{\SetFigFont{10}{12.0}{rm}11}}}
\put(900,-4691){\makebox(0,0)[lb]{\smash{\SetFigFont{10}{12.0}{rm}10}}}

\put(900,-4290){\makebox(0,0)[lb]{\smash{\SetFigFont{10}{12.0}{rm}9}}}
\put(900,-3774){\makebox(0,0)[lb]{\smash{\SetFigFont{10}{12.0}{rm}8}}}
\put(900,-3316){\makebox(0,0)[lb]{\smash{\SetFigFont{10}{12.0}{rm}7}}}
\put(900,-2822){\makebox(0,0)[lb]{\smash{\SetFigFont{10}{12.0}{rm}6}}}
\put(900,-2272){\makebox(0,0)[lb]{\smash{\SetFigFont{10}{12.0}{rm}5}}}
\put(900,-1869){\makebox(0,0)[lb]{\smash{\SetFigFont{10}{12.0}{rm}4}}}
\put(900,-1319){\makebox(0,0)[lb]{\smash{\SetFigFont{10}{12.0}{rm}3}}}
\put(900,-691){\makebox(0,0)[lb]{\smash{\SetFigFont{10}{12.0}{rm}2}}}
\end{picture}

\end{center}
\end{figure}
$\  $\par
$ $
\begin{Theorem} \label{squarefree} 
$\H$ is 4-saturating if and only if $\H$ belongs to the types of Table 2.
\end{Theorem}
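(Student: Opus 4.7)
The plan is to treat the two directions of the equivalence separately. For the ``if'' direction, I would directly verify conditions (i) $\nu(\H) < 4$ and (ii) $\nu(\H - N_\H(i)) \ge 4 - \deg_\H(i)$ for every vertex $i$ of each of the thirteen graphs listed in Table~2; because each type has a small, highly symmetric structure, it suffices to compute $\deg_\H(i)$ and $\nu(\H - N_\H(i))$ for one representative of each orbit under the automorphism group on the vertex set.

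For the ``only if'' direction, assume that $\H$ is a 4-saturating simple graph. If $\nu(\H) \le 2$, then $\H$ is also 3-saturating and Corollary~\ref{nu < 3} forces $\H = K_5$, i.e.\ Type~1 of Table~2. Otherwise $\nu(\H) = 3$; fix a maximum matching $M = \{e_1,e_2,e_3\}$ with $e_k = \{u_k,v_k\}$ and set $U := V(\H) \setminus V(M)$. By Lemma~\ref{berge}, each $i \in U$ lies on an $M$-augmenting cycle $P_i$ and the cycles $\{P_i : i \in U\}$ are pairwise disjoint. Such a cycle is odd: if $|P_i| = 2k_i + 1$, then it uses exactly $k_i$ edges of $M$ (the edges between consecutive matched vertices along the cycle). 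Hence $\sum_{i \in U} k_i \le |M| = 3$, so $|U| \le 3$ and $|V(\H)| = 6 + |U| \le 9$.

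The classification then proceeds by cases on $|U|$, and within each value on the multiset $\{k_i : i \in U\}$. When $|U| = 3$, one has $k_i = 1$ for every $i$, yielding three pairwise disjoint triangles spanning the nine vertices; any extra edge between two of them, combined with two opposite triangle-edges and an edge of the third triangle, would give a matching of size $4$, contradicting $\nu(\H)=3$, so $\H$ is the disjoint union of three triangles (Type~13). When $|U| = 2$ the multiset is $(1,1)$ or $(1,2)$: in the first case the ``free'' $M$-edge must, by (ii) applied at its endpoints, attach to one of the triangles to produce a bowtie (Type~6); in the second case one obtains a triangle disjoint from a pentagon (Type~5). When $|U| = 1$ the single augmenting cycle has length $3$, $5$, or $7$, and one classifies how the remaining $M$-edges and possible additional edges attach, using (ii) at every vertex together with $\nu(\H)=3$; this yields Types~7--12. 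Finally, when $|U| = 0$ the graph has six vertices all saturated by $M$, and (ii) combined with $\nu(\H)=3$ forces one of Types~2, 3, 4.

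The main obstacle will be the bookkeeping in the cases $|U| \le 1$, where the augmenting cycles alone do not span $\H$ and one must track both the positions of the remaining $M$-edges and all potential cross-edges. Two opposing constraints govern each such subcase: the upper bound $\nu(\H) = 3$ rules out most dense configurations of cross-edges, while the lower bound $\nu(\H - N_\H(i)) \ge 4 - \deg_\H(i)$ from~(ii) demands enough matching activity after removing each closed neighborhood. Resolving this tension by a systematic enumeration, analogous to the one carried out in \cite{HLT} for 3-saturating graphs but now with a matching of size three, is the main technical content of the proof.
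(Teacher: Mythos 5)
Your plan follows essentially the same route as the paper's proof: direct verification of the types for the ``if'' direction, reduction to the case $\nu(\H)=3$ via Corollary~\ref{nu < 3}, a bound $6\le n\le 9$ on the number of vertices, and a case analysis organized around the pairwise disjoint $M$-augmenting cycles supplied by Lemma~\ref{berge}. The only differences are cosmetic: you derive $n\le 9$ internally from the fact that each disjoint augmenting cycle consumes at least one edge of $M$ (the paper instead cites \cite{HLT}), and you index the cases by the number of unmatched vertices rather than by $n$, which is the same enumeration.
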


\begin{proof} 
It is easy to verify that all graphs in Table 2 are $4$-saturating. It remains to show that if $\H$ is 
a $4$-saturating graph, then $\H$ belongs to the types of Table 2. Note that from condition (ii) it follows that for every $i \in \H$, $\deg_\H(i) \ge 4 - \nu(\H - N_\H(i)) > 4- \nu(\H).$ Combining with condition (i), we get $\deg_\H(i) \ge 2 \ \forall i$, or, equivalently, every vertex of $\H$ has at least 2 neighbors. \par

If $\nu(\H)<3$, then $\H$ is of Type 1 by Lemma \ref{nu < 3}. Now assume that $\nu(\H)=3.$ Then $\H$ has three disjoint edges. Hence $|V| = n \ge 6$. On the other hand, $n \le 9$ by \cite[Lemma 2.5]{HLT}. \par 

\noindent {\underline{Case 1}}: $n = 6$. Since every vertex $i$ has at least 2 neighbors, $\H-N_\H(i)$ has at most 3 vertices, and hence $\nu(\H-N_\H(i))\le 1$ $\forall i \in \H$. From condition (ii) it follows that $5 \ge \deg_\H(i) \ge 3$ for all $i \in \H$. We distinguish the following subcases: \par

\noindent {\underline{Subcase 1.1}}: $\max \deg_\H(i) = 5$. Let $i$ be a vertex with $\deg_\H(i) = 5$. Since $\nu(\H)=3$, one has $\nu(\H-i) = 2$. Then $\nu(\H-i) = \nu(\H) - 1.$ Moreover, one has
$$(\H-i) -N_{\H-i}(j) = \H - N_\H(j),\\ \mbox{ for all } j\neq i,$$
$$\deg_{\H-i}(j) = \deg_\H(j) -1, \\ \mbox{ for all } j\neq i.$$
Hence, $\H$ is 4-saturating if and only if the induced graph on $V(\H) -\{i\}$ is 3-saturating. It follows from Theorem \ref{3-saturating} that $\H - i$ is either a pentagon, or the union of two triangles meeting at a vertex. Then $\H$ is either of Type 2, or of Type 3. \par

\begin{figure}[ht!]

\begin{tikzpicture}[scale=0.6]

%\draw [thick] (1,6) coordinate (f) -- (-0.5,4) coordinate (g) ;
%\draw [thick] (f) -- (0.5,3.5) coordinate (h) ;
%\draw [thick] (f) -- (1.5,3.5) coordinate (k) ;
%\draw [thick] (f) -- (2.5,4) coordinate (l) ;
%\draw [thick] (1,2) coordinate (m) -- (g) ;
%\draw [thick] (m) -- (h) ;
%\draw [thick] (m) -- (k) ;
%\draw [thick] (m) -- (l) ;
%\draw [thick] (g) -- (h) ;

%\draw[vecArrow] (4, 4) to (6, 4);

%\fill (f) circle (2pt) ;
%\fill (g) circle (2pt) ;
%\fill (h) circle (2pt) ;
%\fill (k) circle (2pt) ;
%\fill (l) circle (2pt) ;
%\fill (m) circle (2pt) ;

%\draw (f) node[above] {$1$} ;
%\draw (g) node[left] {$2$} ;
%\draw (0.3,3.5) node[left,above] {$3$} ;
%\draw (1.7,3.5) node[right,above] {$4$} ;
%\draw (l) node[right] {$5$} ;
%\draw (m) node[below] {$6$} ;

%\draw (4,2) node[right] {$\{4,5\} \notin \H, \{2,4\} \in \H$ then $\{3,5\}\in \H$:} ;

\draw [thick] (9,6) coordinate (a) -- (7.5,4) coordinate (b) ;
\draw [thick] (a) -- (8.5,3.5) coordinate (c) ;
\draw [thick] (a) -- (9.5,3.5) coordinate (d) ;
\draw [thick] (a) -- (10.5,4) coordinate (e) ;
\draw [thick] (9,2) coordinate (x) -- (b) ;
\draw [thick] (x) -- (c) ;
\draw [thick] (x) -- (d) ;
\draw [thick] (x) -- (e) ;
\draw [thick] (b) -- (c) ;
\draw [thick] (d) -- (e) ;

\fill (a) circle (2pt) ;
\fill (b) circle (2pt) ;
\fill (c) circle (2pt) ;
\fill (d) circle (2pt) ;
\fill (e) circle (2pt) ;
\fill (x) circle (2pt) ;

\draw (a) node[above] {$i$} ;
\draw (b) node[left] {$p$} ;
\draw (8.3,3.5) node[left,above] {$q$} ;
\draw (9.7,3.5) node[right,above] {$u$} ;
\draw (e) node[right] {$v$} ;
\draw (x) node[below] {$j$} ;
\draw (9,0.7) node[below] {$\{u,v\} \in \H$} ;

%\draw (14,4) node[left] {or};

\draw [thin] (18,6) coordinate (u) -- (16.5,4) coordinate (o) ;
\draw [very thick] (u) -- (17.5,3.5) coordinate (p) ;
\draw [very thick] (u) -- (18.5,3.5) coordinate (q) ;
\draw [very thick] (u) -- (19.5,4) coordinate (r) ;
\draw [very thick] (18,2) coordinate (v) -- (o) ;
\draw [thin] (v) -- (p) ;
\draw [very thick] (v) -- (q) ;
\draw [very thick] (v) -- (r) ;
\draw [very thick] (o) -- (p) ;
\draw [very thick] (o) -- (q) ;
\draw [very thick] (p) -- (r) ;

\fill (u) circle (2pt) ;
\fill (v) circle (2pt) ;
\fill (o) circle (2pt) ;
\fill (p) circle (2pt) ;
\fill (q) circle (2pt) ;
\fill (r) circle (2pt) ;

\draw (u) node[above] {$i$} ;
\draw (o) node[left] {$p$} ;
\draw (17.3,3.7) node[left,above] {$q$} ;
\draw (18.7,3.7) node[right,above] {$u$} ;
\draw (r) node[right] {$v$} ;
\draw (v) node[below] {$j$} ;
\draw (18,0.7) node[below] {$\{u,v\} \notin \H$} ;
\end{tikzpicture}
\caption{ }
\end{figure}

\noindent \underline{Subcase 1.2}: $\max \deg_\H(i) = 4$. Let $i$ be a vertex with $\deg_\H(i) = 4$. Let $p,q,u,v$ be the neighbors of $i$ and let $j$ be the vertex non-adjacent to $i$. 

If $\deg_\H(j) = 4$, then one has $N_\H(j) = \{p,q,u,v\}$. Since $\deg_\H(p) \ge 3$ then $p$ is adjacent to either $q$, or $u$, or $v$. Without loss of generality, we can assume that $\{p, q\}$ is an edge of $\H$ (see Figure 3). If $\{u,v\}$ is an edge of $\H$ then $\H$ is of type 3. If $\{u,v\} \notin \H$ then $\nu(\H-N_\H(p)) \le \nu(\H|_{\{u,v\}}) = 0$. Combining with the condition $4 - \deg_\H(p) \le\nu(\H-N_\H(p))$ one gets $4 - \deg_\H(p) = 0$. Hence, either $\{p,u\}$ or $\{p,q\}$ is an edge of $\H$. Assume that $\{p, u\}$ is an edge. Then $\{p, v\}$ is not an edge, since $\deg_\H(p) <5.$ From $\deg_\H(v) \ge 3$, it follows that $\{q,v\}$ is an edge. We deduce that $\H$ contains two triangles $\{i,q,v\}$ and $\{p,u,j\}$ and three edges $\{i,u\}$, $\{q,p\}$, $\{v,j\}$, or, equivalently, $\H$ is of type 4.

If $\deg_\H(j) = 3$, then $j$ is not adjacent to one vertex in the set $\{p,q,u,v\}$. Without loss of generality, we can assume that is $v$. Since $\deg_\H(v) \ge 3$ then $v$ is adjacent to at least two vertices in $\{p,q,u\}$. Since the roles of $p,q,u$ are the same, one can assume that $\{v, q\}$ and $\{v,u\}$ are edges of $\H$. Since $\deg_\H(p) \ge 3$ then one of the vertices  $\{q,u,v\}$ is a neighbor of $p$. If $\{p, q\}$ is an edge of $\H$, then $\H$ contains two triangle $\{i,u,v\}$ and $\{p,j,q\}$ and three edges $\{i,p\}$, $\{u,j\}$, $\{v,q\}$ (Figure 4a). If $\{p, u\}$ is an edge of $\H$, then $\H$ contains two triangle $\{i,q,v\}$ and $\{p,j,u\}$ and three edges $\{i,p\}$, $\{q,j\}$, $\{v,u\}$ (Figure 4b). If both $\{p,q\}$ and $\{p,u\}$ are not edges of $\H$ then by applying condition (ii) to $p$, one gets that both $\{p,v\}$ and $\{q,u\}$ are edges of $\H$. Then $\H$ contains two triangle $\{i,p,v\}$ and $\{q,j,u\}$ and three edges $\{i,q\}$, $\{p,j\}$, $\{v,u\}$ (Figure 4c).

\begin{figure}[ht!]

\begin{tikzpicture}[scale=0.6]

\draw [very thick] (1.5,6) coordinate (f) -- (0,4) coordinate (g) ;
\draw [thin] (f) -- (1,3.5) coordinate (h) ;
\draw [very thick] (f) -- (2,3.5) coordinate (k) ;
\draw [very thick] (f) -- (3,4) coordinate (l) ;
\draw [very thick] (1.5,2) coordinate (m) -- (g) ;
\draw [very thick] (m) -- (h) ;
\draw [very thick] (m) -- (k) ;
\draw [very thick] (l) -- (h) ;
\draw [very thick] (g) -- (h) ;
\draw [very thick] (l) -- (k) ;

\fill (f) circle (2pt) ;
\fill (g) circle (2pt) ;
\fill (h) circle (2pt) ;
\fill (k) circle (2pt) ;
\fill (l) circle (2pt) ;
\fill (m) circle (2pt) ;

\draw (f) node[above] {$i$} ;
\draw (g) node[left] {$p$} ;
\draw (0.8,3.6) node[left,above] {$q$} ;
\draw (2.2,3.7) node[right,above] {$u$} ;
\draw (l) node[right] {$v$} ;
\draw (m) node[below] {$j$} ;
\draw (1.5,0) node[above] {4a};

\draw [very thick] (10,6) coordinate (a) -- (8.5,4) coordinate (b) ;
\draw [very thick] (a) -- (9.5,3.5) coordinate (c) ;
\draw [thin] (a) -- (10.5,3.5) coordinate (d) ;
\draw [very thick] (a) -- (11.5,4) coordinate (e) ;
\draw [very thick] (10,2) coordinate (x) -- (b) ;
\draw [very thick] (x) -- (c) ;
\draw [very thick] (x) -- (d) ;
\draw [very thick] (c) -- (e) ;
\draw [very thick] (b) -- (d) ;
\draw [very thick] (e) -- (d) ;

\fill (a) circle (2pt) ;
\fill (b) circle (2pt) ;
\fill (c) circle (2pt) ;
\fill (d) circle (2pt) ;
\fill (e) circle (2pt) ;
\fill (x) circle (2pt) ;

\draw (a) node[above] {$i$} ;
\draw (b) node[left] {$p$} ;
\draw (9.3,3.7) node[left,above] {$q$} ;
\draw (10.7,3.7) node[right,above] {$u$} ;
\draw (e) node[right] {$v$} ;
\draw (x) node[below] {$j$} ;
\draw (10,0) node[above] {4b} ;

\draw [very thick] (17.5,6) coordinate (u) -- (16,4) coordinate (o) ;
\draw [very thick] (u) -- (17,3.5) coordinate (p) ;
\draw [thin] (u) -- (18,3.5) coordinate (q) ;
\draw [very thick] (u) -- (19,4) coordinate (r) ;
\draw [very thick] (17.5,2) coordinate (v) -- (o) ;
\draw [very thick] (v) -- (p) ;
\draw [very thick] (v) -- (q) ;
\draw [thin] (p) -- (r) ;
\draw [very thick] (o) -- (r) ;
\draw [very thick] (p) -- (q) ;
\draw [very thick] (r) -- (q) ;

\fill (u) circle (2pt) ;
\fill (v) circle (2pt) ;
\fill (o) circle (2pt) ;
\fill (p) circle (2pt) ;
\fill (q) circle (2pt) ;
\fill (r) circle (2pt) ;

\draw (u) node[above] {$i$} ;
\draw (o) node[left] {$p$} ;
\draw (17.1,3.5) node[left] {$q$} ;
\draw (18.2,3.5) node[right,below] {$u$} ;
\draw (r) node[right] {$v$} ;
\draw (v) node[below] {$j$} ;
\draw (17.5,0) node[above] {4c} ;

\end{tikzpicture}
\caption{}
\end{figure}

\noindent \underline{Subcase 1.3}: $\max \deg_\H(i) = 3$. It follows that $\deg_\H(i) = 3$ for every vertex $i \in \H$. Due to \cite[Lemma 2.7]{HLT} any 4-saturating graph contains an odd cycle of length $\le 7.$ Hence, $\H$ contains a triangle or a pentagon. If $\H$ contains an induced pentagon, since $\deg_\H(i) = 3$ for all $i \in \H$, then all the vertices of this pentagon must be adjacent to the remain vertex not in this pentagon. But in this case the degree of this vertex is $5$, this is a contradiction to the assumption that $\max \deg_\H(i) = 3$. It follows that $\H$ has no induced pentagon, but $\H$ has a triangle. 

Let $T_1:= \{u,v,w\}$ be a triangle. We claim that the three remain vertices are pairwise adjacent. Assume the opposition that they are not pairwise adjacent. Let $i,j$ be such a pair of non-adjacent vertices. Since $\deg_\H(i)=\deg_\H(j) =3$ then both $i$ and $j$ is connected to at least two vertices of the triangle $\{u,v,w\}$. Then at least one vertex of $\{u,v,w\}$ is connected to both $i$ and $j$, and hence it has degree $\ge 4,$ a contradiction to the assumption that $\max \deg_\H(i) = 3$. So the remain vertices form a triangle, denoted by $T_2.$ Moreover, the condition $\deg_\H(i) =3 \ \forall i$ implies also that any vertex of $T_1$ is connected to exactly one vertex of $T_2$, or, equivalently, $\H$ is of type 4. 

\begin{figure}[ht!]

\begin{tikzpicture}[scale=0.4]

\draw [thick] (0,0) coordinate (a) -- (0,3) coordinate (b) ;
\draw [thick] (0,0) coordinate (a) -- (2,1.5) coordinate (c) ;
\draw [thick] (0,0) coordinate (a) -- (7,0) coordinate (d) ;
\draw [thick]  (0,3) coordinate (b)-- (7,3) coordinate (e);
\draw [thick]  (0,3) coordinate (b)-- (2,1.5) coordinate (c);
\draw [thick] (2,1.5) coordinate (c)-- (5,1.5) coordinate (f);
\draw [thick]  (7,0) coordinate (d)-- (7,3) coordinate (e);
\draw [thick] (7,0) coordinate (d)-- (5,1.5) coordinate (f);
\draw [thick] (7,3) coordinate (e)-- (5,1.5) coordinate (f);

\fill (a) circle (2pt);
\fill (b) circle (2pt);
\fill (c) circle (2pt);
\fill (d) circle (2pt);
\fill (e) circle (2pt);
\fill (f) circle (2pt);

\draw (0,0) node[left,below] {$u$};
\draw (0,3) node[left,above] {$v$};
\draw (2,1.5) node[right, below] {$w$};
\draw (7,0) node[right, below]{$i$};
\draw (7,3) node[right,above]{$j$};
\draw (5,1.5) node[left,below]{$p$};

\end{tikzpicture}
\caption{ }
\end{figure}

\noindent \underline{Case 2}: $n = 7$. \par

If $\H$ is disconnected, then \cite[Lemma 2.8]{HLT} implies that $\H$ is the union of a $2$-saturating graph and a $3$-saturating graph. Therefore, $\H$ is the union of a triangle and a $K_4$ (that is of Type 7). So we can assume that $\H$ is connected. Due to \cite[Lemma 2.7]{HLT}, $\H$ contains either a triangle, or a pentagon, or a $C_7$. If $\H$ contains a $C_7$, then $\H$ is of type 8. \par

Next, we consider only the case where $\H$ contains no $C_7.$ Let $M$ be a maximum matching of $\H$. Since $\nu(\H)=3$ and $n=7$, there exists a unique vertex $i$ not in $M$. Due to Lemma \ref{berge}, there is a $M$-augmenting odd cycle $P_i$ that starts and ends at $i$. Since $\H$ contains no $C_7$ then $P_i$ is either a triangle or a pentagon. We distinguish the following subcases: \par 

\noindent \underline{Subcase 2.1}: {\it There is an $M$-augmenting pentagon $P_i$ that starts and ends at $i$.} In this case, $P_i$ contains two matching edges of $M$. Assume that $P_i:=\{i,b,c,d,e\}$ and $\{u,v\}$ is the remain matching edge of $M$ which is not in $P_i$. Since $\deg_{\H}(j) \ge 2 \ \forall j \in \H$ then both $u$ and $v$ is connected to $P_i$. The condition that $\H$ contains no $C_7$ implies that either both $u$ and $v$ are connected to a vertex of $P_i$, or $u$ and $v$ are connected to two non-adjacent vertices of $P_i$. Therefore, $\H$ is of type 9 or type 10.

\noindent \underline{Subcase 2.2}: {\it There is no $M$-augmenting pentagon that starts and ends at $i$.} Then $P_i$ is a triangle. Let $\{u,v\}$ and $\{p, q\}$ denote the edges of $M$ not in $P_i$. If $\{u,v\}$ and $\{p,q\}$ are non-adjacent then both $\{p,q\}$ and $\{u,v\}$ are connected to $P_i$ (since $\H$ is connected). Moreover, since there is no $M$-augmenting pentagon that starts and ends at $i$, both $p$ and $q$ is adjacent to a vertex of $P_i$, and similarly to $u$ and $v.$ But $\{u,v\}$ and $\{p,q\}$ are non-adjacent, then $p,q,u,v$ are adjacent to the same vertex of $P_i.$ Hence, $\H$ is of Type 11.

If $\{p,q\}$ and $\{u,v\}$ are adjacent, then we can assume, without loss of generality, that is $\{q,u\}$ (see Figure 6). We have the following subcases:

{\it $\bullet$ $p$ or $v$ is adjacent to the triangle $P_a$.} We can assume that is $p$ and assume that $\{p,x\}$ is in $\H$ for some $x \in P_a$ (Figure 6a). If $v$ is adjacent to $P_a$ then either $\H$ is contains a $C_7$ or $\H$ is of type 9. If $v$ is not adjacent to $P_a$ then the condition $\nu(\H - N_\H(v)) \ge 4 - \deg_\H(v)$ implies that $v$ has at least two neighbors, and hence either $\{v,p\}$ or $\{v,q\}$ is in $\H$. If $\{v,q\} \in \H$  then $\H$ is of type 12. If $\{v,q\} \notin \H$ then $\{v,p\} \in \H$. But the condition  $\nu(\H - N_\H(q)) \ge 4 - \deg_\H(q)$ implies that 
$$\deg_\H(q) \ge 4 - \nu(\H- N_\H(q)) \ge 3.$$
Its follows that $q$ is adjacent to $P_a$. Therefore, $\H$ is either contains a $C_7$ or of type 9. 
\begin{figure}[ht!]

\begin{tikzpicture}[scale=0.6]
\draw [thin] (0,0) coordinate (b) -- (2,0) coordinate (c) ;
\draw [thin] (b) -- (1,2) coordinate (a) ;
\draw [thin] (a) -- (c);
\draw [thin] (a) -- (3,2) coordinate (p) ;
\draw [very thick] (p) -- (5,2) coordinate (q) ;
\draw [thin] (4,0)coordinate (u) -- (q) ;
\draw [thin] (6,0) coordinate (v) -- (u);
%\draw [very thick] (u) -- (v);

\fill (a) circle (2pt);
\fill (b) circle (2pt);
\fill (c) circle (2pt);
\fill (p) circle (2pt);
\fill (q) circle (2pt);
\fill (u) circle (2pt);
\fill (v) circle (2pt);

\draw (3,-1) node[below] {6a};
\draw (p) node[above] {$p$};
\draw (q) node[above] {$q$};
\draw (u) node[below] {$u$};
\draw (v) node[below] {$v$};
\draw (a) node[above] {$x$};

\draw [thin] (9,0) coordinate (b) -- (11,0) coordinate (c) ;
\draw [thin] (b) -- (10,2) coordinate (a) ;
\draw [thin] (a) -- (c);
\draw [thin] (a) -- (12,2) coordinate (p) ;
\draw [very thick] (p) -- (14,2) coordinate (q) ;
\draw [thin] (13,0)coordinate (u) -- (p) ;
\draw [thin] (15,0) coordinate (v) -- (u);
%\draw [very thick] (u) -- (v);

\fill (a) circle (2pt);
\fill (b) circle (2pt);
\fill (c) circle (2pt);
\fill (p) circle (2pt);
\fill (q) circle (2pt);
\fill (u) circle (2pt);
\fill (v) circle (2pt);

\draw (12,-1) node[below] {6b};
\draw (p) node[above] {$q$};
\draw (q) node[above] {$p$};
\draw (u) node[below] {$u$};
\draw (v) node[below] {$v$};
\draw (a) node[above] {$x$};

\end{tikzpicture}
\caption{}
\end{figure}

{\it $\bullet$ Both $p$ and $v$ are not adjacent to the triangle $P_a$.} Since $\H$ is connected then either $q$ or $u$ is connected to $P_a$. We can assume that is $q$ (Figure 6b). From the condition $\nu(\H - N_\H(p)) \ge 4 - \deg_\H(p)$ it follows that $p$ has at least two neighbors. But $p$ is not adjacent to $P_a$. Hence, $p$ is either adjacent to $u$ or $v$. Now, by changing the roles of $p$ and $q$, we return to the case above.\par

\noindent \underline{Case 3}: $n = 8$. \cite[Lemma 2.7]{HLT} implies that $\H$ contains either a triangle, or a pentagon, or a $C_7$. If $\H$ contains a $C_7$ then $\H$ is connected, and $\H$ contains the union of a $C_7$ and an edge. Hence, $\nu(\H) =4$. This is a contradiction. Therefore, $\H$ contains no $C_7.$

If $\H$ is disconnected, then \cite[Lemma 2.8]{HLT} implies that $\H$ is a disjoint union of a 2-saturating graph and a 3-saturating graph. Due to \cite{TeraiTrung} a 2-saturating graph is a triangle; and due to Theorem \ref{3-saturating} a 3-saturating graph with $5$ vertices is either a pentagon or a union of two triangles meeting at a vertex. It follows that $\H$ is either of type 5 or type 6. 

Next, we assume that $\H$ is connected. Since $\nu(\H)=3$, we can assume that $M:=\left\{\{m,n\}, \{p,q\}, \{u,v\}\right\}$ is a maximum matching of $\H$, and $i,j$ are the two remain vertices not in $M$. It is obvious that $\{i,j\}$ is not an edge of $\H$. Due to Lemma \ref{berge}, there are two disjoint $M$-augmenting odd cycles, denoted by $P_i$ and $P_j$, that start and end at $i$ and resp. at $j$. Since $\H$ contains no $C_7$ then $P_i$ and $P_j$ is either triangles or pentagons. Since $n=8$ then $P_i$ and $P_j$ can not be two pentagons. We distinguish two subcases:

\noindent \underline{Subcase 3.1}: {\it $P_i$ and $P_j$ are a triangle and a pentagon.} Then the connected graph on $8$ vertices $\H$ must contain a pentagon and a triangle connected by an edge. Hence, $\nu(\H) \ge 4$, a contradiction. 

\noindent \underline{Subcase 3.2}: {\it $P_i$ and $P_j$ are two disjoint triangles.} Without loss of generality, we can assume that the triangles are $T_1:=\{i,m,n\}$ and $T_2:=\{j,p,q\}$. If $T_1$ and $T_2$ are connected by an edge then the induced subgraph of $\H$ on the set $\{i,m,n,j,p,q\}$ has matching number~3. Since the vertex set of this subgraph and $\{u,v\}$ are disjoint, $\nu(\H) = 3 +1 =4$, this is a contradiction to condition (i). Therefore, $T_1$ and $T_2$ are non-adjacent. Because $\H$ is connected then both $T_1$ and $T_2$ are adjacent to the matching edge $\{u,v\}$. Moreover, since $\deg_\H(u) \ge 2$ and $\deg_\H(v) \ge 2$ then there exists two disjoint edges such that one edge connects $T_1$ and $\{u,v\}$ and the other connects $T_2$ and $\{u,v\}$. Therefore, $\H$ contains two triangles connected by a path of length 3 (see Figure 7).

\begin{figure}[ht!]

\begin{tikzpicture}[scale=0.6]
%\draw [<->,thick] (0,3.7) node (yaxis) [above] {}
%        |- (4.5,0) node (xaxis) [right] {};
\draw [thin] (1,0) coordinate (g) -- (2,2) coordinate (h) ;
\draw [thin] (2,2) coordinate (h) -- (3,0) coordinate (i) ;
\draw [thin] (i) -- (g) ; 

\draw [very thick] (7,0) coordinate (c) -- (5,0) coordinate (a) ; 

\draw [thin] (9,0) coordinate (d) -- (10,2) coordinate (e);
\draw [thin] (e) -- (11,0) coordinate (f);
\draw [thin] (f) -- (d);

\draw [thin] (a) -- (i) ;
\draw [thin] (c) -- (d) ;

\fill (a) circle (2pt);
  \fill (c) circle (2pt);
  \fill (d) circle (2pt);
  \fill (e) circle (2pt);
  \fill (f) circle (2pt);
   \fill (g) circle (2pt);
    \fill (h) circle (2pt);
     \fill (i) circle (2pt);

\draw (2,0) node[below] {$T_1$};
\draw (5,0) node[below] {$u$};
\draw (7,0) node[below] {$v$} ;
\draw (10,0) node[below] {$T_2$};

\end{tikzpicture}
\caption{}
\end{figure}

Hence, $\nu(\H) \ge 4$, a contradiction to our assumption.
\par

\noindent \underline{Case 4}: $n = 9$. Then there exists a maximum matching $M$ with 3 disjoint edges $e_1, e_2, e_3$ and 3 different vertices $u_1, u_2, u_3$ not in $M$. Lemma \ref{berge} implies that there are 3 disjoint odd cycles starting at $u_1, u_2, u_3$. Hence, these cycles must be triangles. The condition (i) implies that these triangles are non-adjacent, and it follows that $\H$ is the union of these three disjoint triangles. \par

The proof of Theorem \ref{squarefree} is now complete.\par 
\end{proof}

By Proposition \ref{polar}, the polarization of a 4-saturating weighted graph is also $4$-saturating. Hence, we can use Theorem \ref{squarefree} to classify all 4-saturating weighted graphs. 

\begin{center}
\begin{picture}(0,0)%
\includegraphics{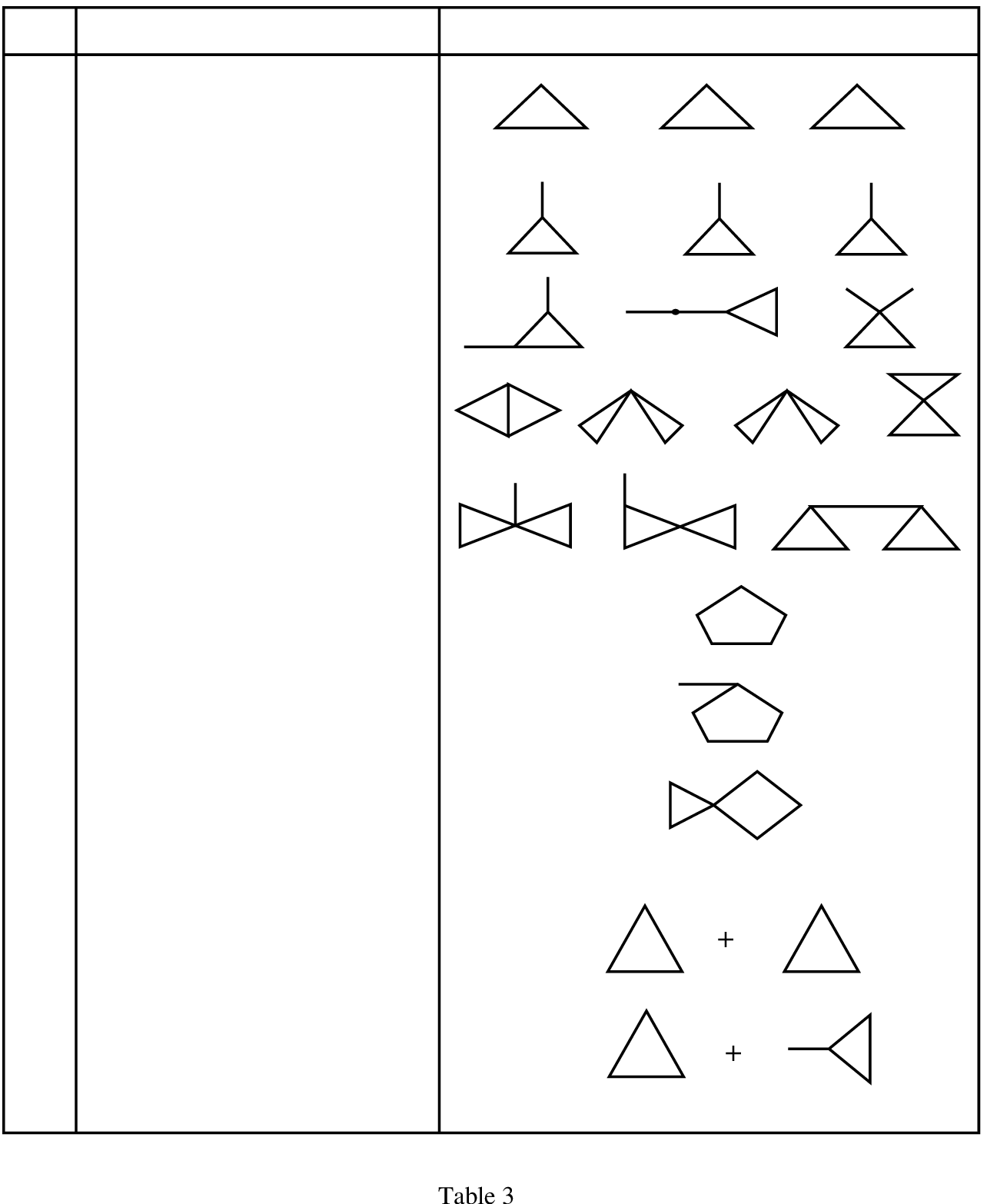}%
\end{picture}%
\setlength{\unitlength}{4144sp}%
\begingroup\makeatletter\ifx\SetFigFont\undefined
% extract first six characters in \fmtname
\def\x#1#2#3#4#5#6#7\relax{\def\x{#1#2#3#4#5#6}}%
\expandafter\x\fmtname xxxxxx\relax \def\y{splain}%
\ifx\x\y   % LaTeX or SliTeX?
\gdef\SetFigFont#1#2#3{%
  \ifnum #1<17\tiny\else \ifnum #1<20\small\else
  \ifnum #1<24\normalsize\else \ifnum #1<29\large\else
  \ifnum #1<34\Large\else \ifnum #1<41\LARGE\else
     \huge\fi\fi\fi\fi\fi\fi
  \csname #3\endcsname}%
\else
\gdef\SetFigFont#1#2#3{\begingroup
  \count@#1\relax \ifnum 25<\count@\count@25\fi
  \def\x{\endgroup\@setsize\SetFigFont{#2pt}}%
  \expandafter\x
    \csname \romannumeral\the\count@ pt\expandafter\endcsname
    \csname @\romannumeral\the\count@ pt\endcsname
  \csname #3\endcsname}%
\fi
\fi\endgroup
\begin{picture}(5844,7207)(144,-6290)

\put(3336,400){\makebox(0,0)[lb]{\smash{\SetFigFont{10}{12.0}{rm}$2$}}}
\put(4317,400){\makebox(0,0)[lb]{\smash{\SetFigFont{10}{12.0}{rm}$3$}}}
\put(5199,400){\makebox(0,0)[lb]{\smash{\SetFigFont{10}{12.0}{rm}$3$}}}
\put(2976,110){\makebox(0,0)[lb]{\smash{\SetFigFont{10}{12.0}{rm}$2$}}}
\put(3654,110){\makebox(0,0)[lb]{\smash{\SetFigFont{10}{12.0}{rm}$2$}}}
\put(4647,110){\makebox(0,0)[lb]{\smash{\SetFigFont{10}{12.0}{rm}$2$}}}
\put(3947,110){\makebox(0,0)[lb]{\smash{\SetFigFont{10}{12.0}{rm}$2$}}}
\put(4840,110){\makebox(0,0)[lb]{\smash{\SetFigFont{10}{12.0}{rm}$3$}}}
\put(5524,110){\makebox(0,0)[lb]{\smash{\SetFigFont{10}{12.0}{rm}$1$}}}

\put(5226,-240){\makebox(0,0)[lb]{\smash{\SetFigFont{10}{12.0}{rm}$2$}}}
\put(4322,-240){\makebox(0,0)[lb]{\smash{\SetFigFont{10}{12.0}{rm}$3$}}}
\put(3270,-240){\makebox(0,0)[lb]{\smash{\SetFigFont{10}{12.0}{rm}$3$}}}
\put(5549,-600){\makebox(0,0)[lb]{\smash{\SetFigFont{10}{12.0}{rm}$2$}}}
\put(5025,-600){\makebox(0,0)[lb]{\smash{\SetFigFont{10}{12.0}{rm}$2$}}}
\put(4122,-600){\makebox(0,0)[lb]{\smash{\SetFigFont{10}{12.0}{rm}$2$}}}
\put(3073,-600){\makebox(0,0)[lb]{\smash{\SetFigFont{10}{12.0}{rm}$2$}}}

\put(5342,-900){\makebox(0,0)[lb]{\smash{\SetFigFont{10}{12.0}{rm}$3$}}}
\put(3270,-940){\makebox(0,0)[lb]{\smash{\SetFigFont{10}{12.0}{rm}$2$}}}
\put(3114,-1140){\makebox(0,0)[lb]{\smash{\SetFigFont{10}{12.0}{rm}$2$}}}
\put(4387,-920){\makebox(0,0)[lb]{\smash{\SetFigFont{10}{12.0}{rm}$2$}}}
\put(4120,-920){\makebox(0,0)[lb]{\smash{\SetFigFont{10}{12.0}{rm}$2$}}}

\put(4776,-1400){\makebox(0,0)[lb]{\smash{\SetFigFont{10}{12.0}{rm}$2$}}}
\put(3856,-1400){\makebox(0,0)[lb]{\smash{\SetFigFont{10}{12.0}{rm}$2$}}}
\put(3126,-1400){\makebox(0,0)[lb]{\smash{\SetFigFont{10}{12.0}{rm}$2$}}}
\put(5870,-1690){\makebox(0,0)[lb]{\smash{\SetFigFont{10}{12.0}{rm}$2$}}}
\put(5325,-1690){\makebox(0,0)[lb]{\smash{\SetFigFont{10}{12.0}{rm}$2$}}}
\put(3133,-1850){\makebox(0,0)[lb]{\smash{\SetFigFont{10}{12.0}{rm}$2$}}}
\put(4414,-1620){\makebox(0,0)[lb]{\smash{\SetFigFont{10}{12.0}{rm}$2$}}}

\put(3730,-2140){\makebox(0,0)[lb]{\smash{\SetFigFont{10}{12.0}{rm}$2$}}}
\put(3157,-2390){\makebox(0,0)[lb]{\smash{\SetFigFont{10}{12.0}{rm}$2$}}}
\put(4926,-2120){\makebox(0,0)[lb]{\smash{\SetFigFont{10}{12.0}{rm}$2$}}}

\put(4180,-2755){\makebox(0,0)[lb]{\smash{\SetFigFont{10}{12.0}{rm}$2$}}}
\put(4522,-2575){\makebox(0,0)[lb]{\smash{\SetFigFont{10}{12.0}{rm}$2$}}}

\put(4503,-3156){\makebox(0,0)[lb]{\smash{\SetFigFont{10}{12.0}{rm}$2$}}}

\put(4328,-4070){\makebox(0,0)[lb]{\smash{\SetFigFont{10}{12.0}{rm}$2$}}}

\put(5026,-4440){\makebox(0,0)[lb]{\smash{\SetFigFont{10}{12.0}{rm}$1$}}}
\put(5291,-4900){\makebox(0,0)[lb]{\smash{\SetFigFont{10}{12.0}{rm}$2$}}}
\put(4715,-4900){\makebox(0,0)[lb]{\smash{\SetFigFont{10}{12.0}{rm}$2$}}}

\put(4979,-5320){\makebox(0,0)[lb]{\smash{\SetFigFont{10}{12.0}{rm}$2$}}}

\put(305,-5461){\makebox(0,0)[lb]{\smash{\SetFigFont{10}{12.0}{rm}10}}}
\put(306,-4673){\makebox(0,0)[lb]{\smash{\SetFigFont{10}{12.0}{rm}9}}}
\put(306,-3974){\makebox(0,0)[lb]{\smash{\SetFigFont{10}{12.0}{rm}8}}}
\put(306,-3298){\makebox(0,0)[lb]{\smash{\SetFigFont{10}{12.0}{rm}7}}}
\put(306,-2800){\makebox(0,0)[lb]{\smash{\SetFigFont{10}{12.0}{rm}6}}}
\put(306,-2240){\makebox(0,0)[lb]{\smash{\SetFigFont{10}{12.0}{rm}5}}}
\put(306,-1694){\makebox(0,0)[lb]{\smash{\SetFigFont{10}{12.0}{rm}4}}}
\put(306,-953){\makebox(0,0)[lb]{\smash{\SetFigFont{10}{12.0}{rm}3}}}
\put(306,-412){\makebox(0,0)[lb]{\smash{\SetFigFont{10}{12.0}{rm}2}}}
\put(306,175){\makebox(0,0)[lb]{\smash{\SetFigFont{10}{12.0}{rm}1}}}

\put(626,-5765){\makebox(0,0)[lb]{\smash{\SetFigFont{10}{12.0}{rm}weight $2$}}}
\put(626,-5597){\makebox(0,0)[lb]{\smash{\SetFigFont{10}{12.0}{rm}edge meeting at a vertex of}}}
\put(626,-5428){\makebox(0,0)[lb]{\smash{\SetFigFont{10}{12.0}{rm}spanned by a triangle and an}}}
\put(626,-5259){\makebox(0,0)[lb]{\smash{\SetFigFont{10}{12.0}{rm}simple triangle and a graph}}}
\put(626,-5091){\makebox(0,0)[lb]{\smash{\SetFigFont{10}{12.0}{rm}$\Omega$ is a disjoint union of a}}}

\put(626,-4803){\makebox(0,0)[lb]{\smash{\SetFigFont{10}{12.0}{rm}with weight $(2,2,1)$}}}
\put(626,-4634){\makebox(0,0)[lb]{\smash{\SetFigFont{10}{12.0}{rm}simple triangle and a triangle}}}
\put(626,-4456){\makebox(0,0)[lb]{\smash{\SetFigFont{10}{12.0}{rm}$\Omega$ is a disjoint union of a}}}

\put(626,-4182){\makebox(0,0)[lb]{\smash{\SetFigFont{10}{12.0}{rm}at a vertex of weight $2$}}}
\put(626,-4035){\makebox(0,0)[lb]{\smash{\SetFigFont{10}{12.0}{rm}a rectangle and a triangle }}}
\put(626,-3868){\makebox(0,0)[lb]{\smash{\SetFigFont{10}{12.0}{rm}$\Omega$ is spanned by the union of}}}

\put(626,-3600){\makebox(0,0)[lb]{\smash{\SetFigFont{10}{12.0}{rm}ing at a vertex of weight $2$}}}
\put(626,-3425){\makebox(0,0)[lb]{\smash{\SetFigFont{10}{12.0}{rm}a pentagon and an edge meet-}}}
\put(626,-3250){\makebox(0,0)[lb]{\smash{\SetFigFont{10}{12.0}{rm}$\Omega$ is spanned by a union of}}}

\put(626,-2900){\makebox(0,0)[lb]{\smash{\SetFigFont{10}{12.0}{rm}with weight $(2,2,1,1,1)$}}}
\put(626,-2720){\makebox(0,0)[lb]{\smash{\SetFigFont{10}{12.0}{rm}$\Omega$ is spanned by a pentagon}}}

\put(626,-2350){\makebox(0,0)[lb]{\smash{\SetFigFont{10}{12.0}{rm}two triangles and an edge}}}
\put(626,-2160){\makebox(0,0)[lb]{\smash{\SetFigFont{10}{12.0}{rm}$\Omega$ is spanned by a union of }}}

\put(626,-1732){\makebox(0,0)[lb]{\smash{\SetFigFont{10}{12.0}{rm}two triangles}}}
\put(626,-1547){\makebox(0,0)[lb]{\smash{\SetFigFont{10}{12.0}{rm}$\Omega$ is spanned by a union of}}}

\put(626,-1060){\makebox(0,0)[lb]{\smash{\SetFigFont{10}{12.0}{rm}triangle and 2 edges}}}
\put(626,-894){\makebox(0,0)[lb]{\smash{\SetFigFont{10}{12.0}{rm}$\Omega$ is spanned by a union of a }}}

\put(626,-433){\makebox(0,0)[lb]{\smash{\SetFigFont{10}{12.0}{rm}triangle and an edge}}}
\put(626,-239){\makebox(0,0)[lb]{\smash{\SetFigFont{10}{12.0}{rm}$\Omega$ is spanned by a union of a }}}

\put(626,181){\makebox(0,0)[lb]{\smash{\SetFigFont{10}{12.0}{rm}$(2,2,2)$ or $(3,2,2)$ or $(3,3,1)$}}}
\put(626,362){\makebox(0,0)[lb]{\smash{\SetFigFont{10}{12.0}{rm}$\Omega$ is a triangle with weight}}}

\put(193,600){\makebox(0,0)[lb]{\smash{\SetFigFont{10}{12.0}{rm}Type}}}
\put(1306,600){\makebox(0,0)[lb]{\smash{\SetFigFont{10}{12.0}{rm}Description}}}
\put(4073,600){\makebox(0,0)[lb]{\smash{\SetFigFont{10}{12.0}{rm}Figure}}}

\end{picture}

\end{center}

\begin{Theorem} \label{4-sat weighted graphs} 
A weighted graph $\H$ is 4-saturating if and only if $\H$ belongs to the types of Table 2 and Table 3. 
\end{Theorem}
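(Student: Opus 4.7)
My plan is to reduce the weighted case to the simple case, which is already resolved in Theorem \ref{squarefree}, by using the polarization operation together with its inverse (collapsing). If $\H$ is a 4-saturating weighted graph, Proposition \ref{polar} gives that the polarization $\p(\H)$ is also 4-saturating; since $\p(\H)$ is a simple graph, Theorem \ref{squarefree} forces $\p(\H)$ to be one of the thirteen graphs in Table 2. Conversely, every weighted graph is a collapsed weighted graph of its polarization, and the Proposition following Lemma \ref{matching collap} says that collapsing both preserves and reflects $t$-saturation. Hence the set of 4-saturating weighted graphs is precisely the set of weighted graphs obtainable by collapsing the graphs of Table 2.

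The remaining task is combinatorial bookkeeping. For each of the thirteen graphs $G$ in Table 2, I would enumerate the equivalence classes of pairwise non-adjacent vertices of $G$ that share a common neighborhood, and list all ways of merging each such class into a single weighted vertex whose weight is the number of merged vertices. Several Table 2 graphs admit no non-trivial merging at all: $K_5$ contains no non-adjacent pair, and the rigid connected types such as the $C_7$-spanned graph, the triangular prism, and the cone over $C_5$ contain no non-adjacent vertices with identical neighborhoods. In the remaining Table 2 graphs the candidate collapsings are few and easily enumerated; tracking them produces exactly the ten non-trivially weighted graphs of Table 3, with the disjoint-union types of Table 2 accounting for the disjoint-union entries of Table 3, and the connected types with twin vertices accounting for the weighted-triangle and weighted-pentagon entries.

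The converse direction, that every graph in Table 2 or Table 3 is 4-saturating, is immediate: Table 2 is covered by Theorem \ref{squarefree}, and for each entry $\H$ of Table 3 the polarization $\p(\H)$ is one of the Table 2 graphs, so Proposition \ref{polar} applies. The principal obstacle is the combinatorial exhaustiveness of the case analysis: one must verify, without omission or duplication, that the collapsings of the thirteen Table 2 graphs yield precisely the ten Table 3 types and no others. Symmetries of the Table 2 graphs (for instance the interchange of the two triangles in the disjoint union of a triangle and a $K_4$, or the rotations of $C_5$ in the cone configuration) substantially cut down the number of distinct collapsings, but a careful manual check of each Table 2 type remains unavoidable.
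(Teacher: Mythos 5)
Your proposal is correct and follows essentially the same route as the paper: both directions are handled by polarization (Proposition \ref{polar}) and collapsing, reducing the classification to Theorem \ref{squarefree}, after which the weighted types of Table 3 are obtained by enumerating the admissible collapsings of the Table 2 graphs (the paper organizes this enumeration by the number of vertices of $\p(\H)$ rather than by Table 2 type, but the content is the same). Like the paper, you leave the exhaustive bookkeeping of the collapsings largely asserted rather than carried out, so no essential difference in approach or rigor.
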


\begin{proof} We can easily verify that the connected graphs belongs to the types of Table 3 are $4$-saturating. 

Now, let $\H$ be a $4$-saturating graph. We will prove that $\H$ belongs to the types of Table 2 and Table 3. If $\H$ is a simple graph, i.e. all vertices of $\H$ are of weight 1, then Proposition \ref{squarefree} implies that $\H$ is one of the types of Table 2. If $\H$ is a weighted graph with some vertex of weight at least 2, then as in Proposition \ref{polar} implies that $\p(\H)$ must belongs to the types of Table 2. Hence, $\H$ can be obtained from a graph belonging to Table 2 by collapsing some non-adjacent vertices with the same neighborhood. Let $W$ denote the vertex set of $\p(\H)$. Then $|W| \neq 5,$ and $|W|\neq 9$, since in these cases, $\p(\H)$ is either a $K_5$ or a union of three non-adjacent triangles, responsively, and then it has no non-adjacent vertices which have the same neighborhood.   

We will treat all the possible cases of $p(\H)$ according to $|W|$. We have:

\noindent \underline{Case 1}: $|W|=6$. By collapsing some non-adjacent vertices with the same neighborhood, we obtain either a triangle with weight vector $(2,2,2)$, or two triangles sharing an edge of weight $(2,2)$ , or a union of two triangles meeting at a vertex of weight 2.

\noindent \underline{Case 2}: $|W|=7$. By collapsing some non-adjacent vertices with the same neighborhood, we obtain either a triangle with weight vector $(3,2,2)$, or $(3,3,1)$; or a union of a triangle and an edge of weight vector as in Type 2 of Table 3; or a union of a triangle and two edges as in Type 3 of Table 3; or a union of a triangles $(1,1,1)$ and a triangle $(2,2,1)$ sharing a common vertex (that is the cases 3 and 4, Type 4 of Table 3); or a pentagon with an edge of weight $(2,2)$ ; or a union of a pentagon and an edge meeting at a vertex of weight 2 (that is of Type 7 of Table 3); or a union of two triangles and an edge as in Type 5 of Table 3; or union of a triangle and a rectangle meeting at a vertex of weight 2 (that is of Type 8 of Table 3).

\noindent \underline{Case 3}: $|W|=8$. $\p(\H)$ must be a disjoint union of a triangle and a simple $3$-saturating graph. Hence $\H$ must be of Type 9 or Type 10 of Table 3.
\end{proof}

%%%%%%%%%%%%%%%%%%%%%%%%%%%%%%%%%

\section{Saturation and associated prime ideals of $I^4$}

In this section, we first describe all monomials of the saturation of $I^4$,
where $I$ is the edge ideal of a simple graph $\G$.
Then we will give a combinatorial characterization of the associated primes of $I^4.$

\begin{Theorem} \label{monomial} 
Let $\a \in \NN^n$. Then $\xx^\a \in \widetilde{I^4} \setminus I^4$ if only if $V_\a$ is a dominating set of $\G$ and one of the following conditions is satisfied:\par

{\rm{(i)}} $\G_\a$ is the disjoint union of a triangle and a $K_4$ (that is Type 7\ of Table 2) such that every $i \in V \setminus V_\a$ is adjacent to either the triangle or adjacent to every triangles of the $K_4$;\par

{\rm{(ii)}} $\G_\a$ is spanned by two disjoint triangles connected by a path of length 2 (that is Type 12 of Table 2) such that either the endpoints of the path are adjacent in $\G_\a$ or every vertex in $V \setminus V_\a$ is adjacent to at least one triangle;\par 

{\rm{(iii)}} $\G_\a$ belongs to the types of Table 2 minus Type 7 and Type 12 or belongs to the type of Table 3, and for any $i \in V \setminus V_\a$, $\deg_{\a}(i) > 7 - \deg(\xx^\a).$

\end{Theorem}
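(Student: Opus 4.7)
The plan is to apply Theorem \ref{satthen} in both directions. That theorem recasts $\xx^\a \in \widetilde{I^4} \setminus I^4$ as the conjunction of (a) $\G_\a$ is $4$-saturating, and (b) $\nu(\G_\a - N_\a(i)) \ge 4 - \deg_\a(i)$ for every $i \in V \setminus V_\a$. Corollary \ref{dominating} forces $V_\a$ to be dominating, which is the common hypothesis of (i)--(iii), and Theorem \ref{4-sat weighted graphs} identifies (a) with $\G_\a$ being one of the types of Tables 2 or 3. Hence the whole proof reduces to translating the pointwise matching condition (b), type by type, into the combinatorial statements (i)--(iii).

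My first step is to single out the two exceptional types (Type 7 and Type 12 of Table 2). For every remaining type of Tables 2 and 3 I will establish the clean equivalence
\[
\nu(\G_\a - N_\a(i)) \ge 4 - \deg_\a(i) \iff \deg_\a(i) > 7 - \deg(\xx^\a),
\]
which is exactly condition (iii). The underlying reason is that for these types no single vertex is ``matching-critical'': a direct case check shows that removing any subset $N \subseteq V(\G_\a)$ drops the matching number by at most $\sum_{v \in N} a_v - 1$ in the extremal situation where (b) is tight. Combined with $\nu(\G_\a) = 3$ (or, for the $K_5$ type, with $\nu(\G_\a) = 2$, which must be verified directly), this gives the degree reformulation. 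I expect this step to be routine: only finitely many types need to be examined, and each verification amounts to a short matching computation on the weighted graph (equivalently, on its polarization, by Lemma \ref{matching}).

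The main obstacle is the two exceptional types, both of total weight $7$. For Type 7 (disjoint triangle $T$ and $K_4$), removing a single vertex of the $K_4$ drops the matching of that component from $2$ to $1$, so the naive bound $\nu(\G_\a) - \nu(\G_\a - N) \le |N|$ is no longer tight. Explicitly, the only way (b) fails for $i \in V \setminus V_\a$ is that $i$ is adjacent to exactly one vertex of the $K_4$ and to no vertex of $T$, in which case $\G_\a - N_\a(i)$ is a disjoint union of two triangles with matching $2 < 3$. Ruling out this failure yields condition (i), using the equivalence ``$i$ is adjacent to at least two vertices of $K_4$'' $\Longleftrightarrow$ ``$i$ meets every triangle of the $K_4$''. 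For Type 12, label the triangles $T_1, T_2$ and the connecting length-$2$ path by $u - m - v$ with $u \in T_1$ and $v \in T_2$. If $\{u, v\}$ happens to be an edge of $\G_\a$, a direct check shows that (b) is automatic for every $i$ once $V_\a$ is dominating (the point being that $\G_\a - \{m\}$ becomes two triangles connected by an edge, still with matching $3$). If $\{u, v\}$ is not an edge, then $\G_\a - \{m\} = T_1 \sqcup T_2$ has matching only $2$, so (b) forces every $i \in V \setminus V_\a$ to be adjacent to at least one of $T_1, T_2$. This dichotomy yields condition (ii).

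With the forward translation done, the converse is immediate in each of the three cases: given $V_\a$ dominating and the relevant one of (i)--(iii), the graph $\G_\a$ is $4$-saturating by Theorem \ref{4-sat weighted graphs} and condition (b) of Theorem \ref{satthen} follows from the same matching computations used in the forward direction. Theorem \ref{satthen} then gives $\xx^\a \in \widetilde{I^4} \setminus I^4$, completing the argument.
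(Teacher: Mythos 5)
Your overall route is the same as the paper's: reduce via Theorem \ref{satthen} and Corollary \ref{dominating} to the pointwise condition $\nu(\G_\a - N_\a(i)) \ge 4 - \deg_\a(i)$, invoke the classification of $4$-saturating weighted graphs, and then translate that condition type by type, isolating Types 7 and 12 as the only ones where the uniform degree bound $\deg_\a(i) > 7 - \deg(\xx^\a)$ is not the right answer. The Type 7 analysis and the bulk of the finite verification for clause (iii) match the paper (which is no more detailed than you are on those routine matching computations).

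There is, however, a genuine gap in your treatment of Type 12. ``Spanned by two triangles connected by a path of length $2$'' allows extra edges, so the two triangles $T_1, T_2$ may be joined by an edge other than $\{u,v\}$. In that situation your claim that ``if $\{u,v\}$ is not an edge, then $\G_\a - \{m\} = T_1 \sqcup T_2$ has matching only $2$'' is false: $\G_\a - \{m\}$ contains two triangles joined by an edge and still has matching number $3$, so condition (b) does \emph{not} force every external vertex to meet a triangle. Concretely, take $T_1=\{a,b,u\}$, $T_2=\{c,d,v\}$, the path $u$--$m$--$v$, the extra edge $\{a,c\}$, and an external vertex $i$ adjacent only to $m$: then $\xx^\a \in \widetilde{I^4}\setminus I^4$, but your dichotomy would place this $\a$ under clause (ii) and wrongly exclude it. The paper's proof handles exactly this point by observing that when the two triangles are adjacent but the path endpoints are not, $\G_\a$ is simultaneously of Type 8 or Type 9 (it is spanned by a $C_7$, or by a pentagon and a triangle meeting at a vertex), hence is governed by clause (iii), where only $\deg_\a(i)\ge 1$ is required. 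You need to add this overlap argument to make the Type 12 case (and hence the ``only if'' direction) correct.
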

 
\begin{proof} Due to Theorem \ref{satthen}, $\xx^\a \in  \tilde{I^4} \setminus I^4$ if and only if $\G_\a$ is $4$-saturating and $\nu(\G_\a  - N_\a(i)) \ge 4 - \deg_\a(i)$ for all $i \in V \setminus V_\a$. From this it follows that $\G_\a$ is $4$-saturating and $V_\a$ is a dominating set of $\G$. Therefore, $\G_\a$ belongs to the types of Table 2 and Table 3. Hence, $5 \le \deg(\xx^\a) \le 9.$ \par

We distinguish the following cases \par

\noindent {\underline{Case 1}}: $\deg(\xx^\a)=5$. Then $\G_\a$ is a $K_5$, and the condition $\nu(\G_\a  - N_\a(i)) \ge 4 - \deg_\a(i)$ implies that $i$ must be adjacent to at least 3 vertices of $\G_\a$ for all $i \in V\setminus V_\a.$ Hence,
$$\deg_\a(i) \ge 3 > 7-5=7-\deg(\xx^a) \ \forall i \in V \setminus V_\a.$$

\noindent {\underline{Case 2}}: $\deg(\xx^\a)=6$. Then the condition $\nu(\G_\a  - N_\a(i)) \ge 4 - \deg_\a(i)$ implies that $\deg_\a(i) \ge 2$ for all $i \in V \setminus V_\a.$ Hence 
$$\deg_\a(i) \ge 2 > 7-6=7-\deg(\xx^a) \ \forall i \in V \setminus V_\a.$$

\noindent {\underline{Case 3}}: $\deg(\xx^\a) \ge 7$. Then the condition $\nu(\G_\a  - N_\a(i)) \ge 4 - \deg_\a(i)$ implies that $i$ must be adjacent to some vertex of $\G_\a$ for all $i \in V \setminus V_\a,$ or 
$$\deg_\a(i) \ge 1> 7-\deg(\xx^a) \ \forall i \in V \setminus V_\a.$$ 

Moreover, in the case $\G_\a$ is a disjoint union of a triangle and a $K_4$, if there is a vertex $i \in V \setminus V_\a$ such that $i$ is not adjacent to the triangle, then $\nu(\G_\a - N_\a(i)) = \nu(K_4 - N_{K_4}(i)) +1$. Hence, the condition $\nu(\G_\a  - N_\a(i)) \ge 4 - \deg_\a(i)$ implies that $\deg_{K_4}(i) = \deg_\a(i) \ge 3 - \nu(K_4 - N_{K_4}(i)) \ge 2$. Therefore, $i$ must be adjacent to every triangles of the $K_4$.

In the case $\G_\a$ is the graph of two disjoint triangles connected by a path of length 2, we denote by $v$ the middle point of the path. 

If the two triangles are adjacent then either the endpoints of the path are adjacent, or $\G_\a$ belongs to either the type 8 or type 9 of Table 2.

If the two triangles are non-adjacent then $\nu(\G_\a - N_\a(v)) = 2$. Therefore, for every vertex $i \in V \setminus V_\a$ such that $i$ is adjacent to $v$, one has $\deg_\a(i) \ge 4- \nu(\G_\a  - N_\a(i)) \ge 4 - \nu(\G_\a - N_\a(v)) =2$. From this it follows that $i \in V \setminus V_\a$ must be adjacent to one of the triangles.  \par

Conversely, it is easy to verify that if $\G_\a$ is one of graphs satisfying one of the conditions in the theorem then $\G_\a$ is a $4$-saturating, and $\nu(\G_\a  - N_\a(i)) \ge 4 - \deg_\a(i)$ for every $i \in V \setminus V_\a$. Hence, $\xx^\a \in  \tilde{I^4} \setminus I^4$.
\end{proof}

Applying the above theorem and Theorem \ref{monomial}, we can characterize the associated primes of $I^4$ as follows:
 
\begin{Theorem} \label{asso 4} Let $F$ be a cover of $\G$. Then $P_F$ is an associated prime of $I^4$ if and only if $F$ is a minimal cover or $F$ is minimal among the covers containing the closed neighborhood of the vertex set of one of subgraphs in Table 4.
\end{Theorem}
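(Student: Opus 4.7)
The plan is to deduce this theorem by combining Theorem \ref{associated prime It} (the $t$-saturating graph characterization of associated primes from \cite{HLT}) with Theorem \ref{monomial} (the explicit description of monomials in $\widetilde{I^4}\setminus I^4$) and the classifications of $4$-saturating weighted graphs in Theorems \ref{squarefree} and \ref{4-sat weighted graphs}. The proof splits into two cases according to whether $F$ is minimal or embedded.

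For the minimal case, every minimal cover $F$ of $\G$ satisfies $\core(F)=\emptyset$, and $P_F$ is automatically a minimal (hence associated) prime of $I^4$; this handles the first alternative in the statement. For the embedded case, I would invoke Theorem \ref{associated prime It}: $P_F$ is an embedded associated prime of $I^4$ exactly when $F$ is minimal among covers containing $N[V_\a]$ for some $\a\in\NN^n$ such that $\G_\a$ is $4$-saturating and $\nu(\G_\a-N_\a(i))\ge 4-\deg_\a(i)$ for all $i\in\core(F)\setminus V_\a$. The goal is to translate this into a statement about which subgraph $\G_\a$ is, i.e.\ to show that the base of $\G_\a$ is one of the shapes listed in Table 4, and conversely that each such shape produces an associated prime.

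Forward direction: given such $\a$, the graph $\G_\a$ is $4$-saturating, hence by Theorem \ref{4-sat weighted graphs} it belongs to one of the types of Tables 2 and 3. I would then use Theorem \ref{monomial} as a bridge: if $\xx^\a\in\widetilde{I^4}\setminus I^4$, we already know from (i)--(iii) of that theorem exactly which additional combinatorial dominance conditions $V_\a$ satisfies with respect to $V\setminus V_\a$. The only subtlety is that Theorem \ref{associated prime It} requires the matching inequality just on $\core(F)\setminus V_\a$ rather than on all of $V\setminus V_\a$; but any $i\in V\setminus V_\a$ lying outside $\core(F)$ must have a neighbour in $V\setminus F$, so the inequality at such vertices is irrelevant to whether $P_F$ is associated. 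Therefore one obtains the base of $\G_\a$ by passing from the rigid list of Theorem \ref{monomial} to the slightly looser list of Table 4, where the added flexibility simply reflects that the dominating conditions (i) and (ii) of Theorem \ref{monomial} only need to hold at core vertices. In each case the vertex set of the base of $\G_\a$ is the vertex set of a subgraph in Table 4, and the cover $F$ is minimal among covers containing its closed neighbourhood.

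Converse direction: given a subgraph $H$ of type listed in Table 4 and a cover $F$ of $\G$ minimal among those containing $N[V(H)]$, I would exhibit an explicit weight vector $\a$ on $V(H)$ (reading off the weights displayed in Tables 2 and 3, with weight $0$ elsewhere) so that $\G_\a=H$ with the prescribed weights, and verify by the classification that $\G_\a$ is $4$-saturating. For each $i\in\core(F)\setminus V_\a$, the minimality of $F$ forces $i$ to be adjacent to every vertex of $V\setminus F$ only through $V_\a$, which combined with the structure of $H$ gives the inequality $\nu(\G_\a-N_\a(i))\ge 4-\deg_\a(i)$; then Theorem \ref{associated prime It} yields $P_F\in\Ass(R/I^4)$. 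The main obstacle, and the most tedious step, will be this case-by-case verification for each of the roughly two dozen graph types in Tables 2 and 3, tracking carefully which dominance conditions become vacuous on non-core vertices and which extra shapes consequently appear in Table 4; here the classification machinery already developed (especially Lemma \ref{berge} and the $n$-bounds used in the proof of Theorem \ref{squarefree}) does most of the work, so the remaining task is essentially bookkeeping.
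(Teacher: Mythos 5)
Your overall strategy --- split into minimal versus embedded primes, invoke Theorem \ref{associated prime It} for the embedded case, and feed in Theorem \ref{monomial} together with the classification of $4$-saturating weighted graphs --- is the same skeleton the paper uses. But there is a genuine gap in how you get from the list of $4$-saturating types (Tables 2 and 3) to the list in Table 4. You attribute the discrepancy between the two lists to the fact that the matching inequality is only required on $\core(F)\setminus V_\a$ rather than on all of $V\setminus V_\a$. That is not the mechanism, and it would not produce the correct Table 4. The actual content of the paper's proof is a collection of \emph{weight-redistribution reductions}: for several types $\G_\a$ one replaces $\a$ by a vector $\b$ with smaller support (e.g.\ for the disjoint union of a triangle and a $K_4$, one takes $\b=\a-\e_t+\e_u+\e_v$ with $t,u,v$ in the $K_4$, turning the $K_4$ into a triangle of weight $(2,2,1)$) and then uses the dominance conditions of Theorem \ref{monomial} to check that $N[V_\b]=N[V_\a]$ and that $\G_\b$ still satisfies the hypotheses of Theorem \ref{associated prime It}. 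This is what lets the paper absorb the triangle-plus-$K_4$ type, the two-triangles-joined-by-a-path type (in the non-adjacent-endpoints situation), the triangular prism, the cone on $C_5$, the four-triangle configuration, and the doubled-edge bowtie into smaller base graphs (two disjoint triangles, a pentagon, two triangles meeting at a vertex, a single triangle), so that they contribute no new rows to Table 4. Without these reductions your argument would output a strictly larger and partially redundant table, i.e.\ a different theorem.

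A secondary issue: in your converse direction you claim that minimality of $F$ among covers containing $N[V(H)]$ ``forces'' the inequality $\nu(\G_\a-N_\a(i))\ge 4-\deg_\a(i)$ at vertices $i\in\core(F)\setminus V_\a$. Minimality of the cover controls which vertices lie in $\core(F)$, but it does not by itself yield the matching inequality at those vertices; that inequality is exactly the extra dominance hypothesis recorded in conditions (i)--(iii) of Theorem \ref{monomial} (restricted to $\core(F)$), and it has to be verified, not deduced from minimality. So the converse also needs the same case analysis tied to the specific adjacency conditions of Theorem \ref{monomial}, not just the existence of a weight vector realizing each shape in Table 4.
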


\begin{proof} It is well-known that $P_F$ is a minimal prime of $I^4$ if and only if $F$ is a minimal cover of $\G$. \par

Next, we consider only the case where $F$ is not a minimal cover of $\G$. One has core$(F) \neq \emptyset$. 
Let $S := k[x_i|\ i \in \mbox{core}(F)]$ and $J := I(\G_{\rm{core}(F)})$.\par
  
By Theorem \ref{associated prime It}, $P_F$ is an associated prime of $I^4$ if and only if $F$ is a minimal  among the covers containing $N[V_\a]$ for some $\a \in \NN^n$ such that $\G_\a$ is $4$-saturating and $\nu(\G_\a  - N_\a(i)) \ge 4 - \deg_\a(i)$ for all $i \in \rm{core}(F) \setminus V_\a$. By Theorem \ref{monomial}, this is the case if and only if $\G_\a$ satisfies one of the conditions in Theorem \ref{monomial}.

We will prove that the base graph $\G_{V_\a}$ of $\G_\a$ is one of the subgraphs of Table 4. We treat all the possible types of $\G_\a$.\par

\begin{center}
\begin{picture}(0,0)%
\includegraphics{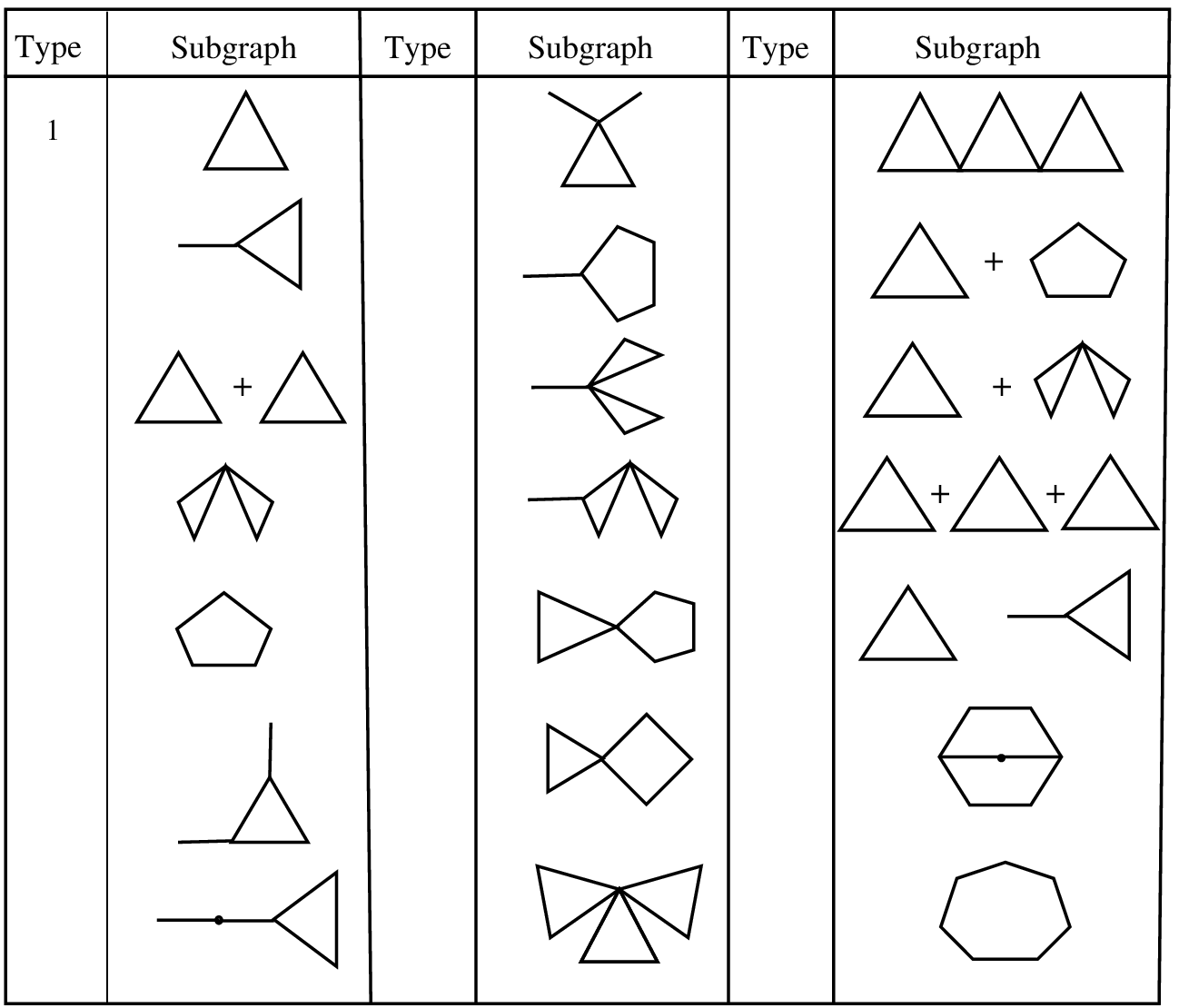}%
\end{picture}%
\setlength{\unitlength}{4144sp}%
\begingroup\makeatletter\ifx\SetFigFont\undefined
% extract first six characters in \fmtname
\def\x#1#2#3#4#5#6#7\relax{\def\x{#1#2#3#4#5#6}}%
\expandafter\x\fmtname xxxxxx\relax \def\y{splain}%
\ifx\x\y   % LaTeX or SliTeX?
\gdef\SetFigFont#1#2#3{%
  \ifnum #1<17\tiny\else \ifnum #1<20\small\else
  \ifnum #1<24\normalsize\else \ifnum #1<29\large\else
  \ifnum #1<34\Large\else \ifnum #1<41\LARGE\else
     \huge\fi\fi\fi\fi\fi\fi
  \csname #3\endcsname}%
\else
\gdef\SetFigFont#1#2#3{\begingroup
  \count@#1\relax \ifnum 25<\count@\count@25\fi
  \def\x{\endgroup\@setsize\SetFigFont{#2pt}}%
  \expandafter\x
    \csname \romannumeral\the\count@ pt\expandafter\endcsname
    \csname @\romannumeral\the\count@ pt\endcsname
  \csname #3\endcsname}%
\fi
\fi\endgroup
\begin{picture}(5917,5051)(361,-4583)
\put(4246,-2136){\makebox(0,0)[lb]{\smash{\SetFigFont{10}{12.0}{rm}18}}}
\put(4241,-211){\makebox(0,0)[lb]{\smash{\SetFigFont{10}{12.0}{rm}15}}}
\put(4241,-836){\makebox(0,0)[lb]{\smash{\SetFigFont{10}{12.0}{rm}16}}}
\put(4241,-1586){\makebox(0,0)[lb]{\smash{\SetFigFont{10}{12.0}{rm}17}}}
\put(4241,-2836){\makebox(0,0)[lb]{\smash{\SetFigFont{10}{12.0}{rm}19}}}
\put(5200,-2650){\makebox(0,0)[lb]{\smash{\SetFigFont{10}{12.0}{rm}+}}}
\put(4241,-3486){\makebox(0,0)[lb]{\smash{\SetFigFont{10}{12.0}{rm}20}}}
\put(4241,-4161){\makebox(0,0)[lb]{\smash{\SetFigFont{10}{12.0}{rm}21}}}
\put(2446,-211){\makebox(0,0)[lb]{\smash{\SetFigFont{10}{12.0}{rm}8}}}
\put(2446,-836){\makebox(0,0)[lb]{\smash{\SetFigFont{10}{12.0}{rm}9}}}
\put(2446,-1586){\makebox(0,0)[lb]{\smash{\SetFigFont{10}{12.0}{rm}10}}}
\put(2446,-2136){\makebox(0,0)[lb]{\smash{\SetFigFont{10}{12.0}{rm}11}}}
\put(2446,-2836){\makebox(0,0)[lb]{\smash{\SetFigFont{10}{12.0}{rm}12}}}
\put(2446,-3486){\makebox(0,0)[lb]{\smash{\SetFigFont{10}{12.0}{rm}13}}}
\put(2446,-4161){\makebox(0,0)[lb]{\smash{\SetFigFont{10}{12.0}{rm}14}}}
\put(588,-1586){\makebox(0,0)[lb]{\smash{\SetFigFont{10}{12.0}{rm}3}}}
\put(588,-2136){\makebox(0,0)[lb]{\smash{\SetFigFont{10}{12.0}{rm}4}}}
\put(588,-2836){\makebox(0,0)[lb]{\smash{\SetFigFont{10}{12.0}{rm}5}}}
\put(588,-3486){\makebox(0,0)[lb]{\smash{\SetFigFont{10}{12.0}{rm}6}}}
\put(588,-4161){\makebox(0,0)[lb]{\smash{\SetFigFont{10}{12.0}{rm}7}}}
\put(588,-836){\makebox(0,0)[lb]{\smash{\SetFigFont{10}{12.0}{rm}2}}}
\put(3000,-4861){\makebox(0,0)[lb]{\smash{\SetFigFont{10}{12.0}{rm}Table 4.}}}
\end{picture}

\end{center}

$ $\par

If $\G_\a$ is a disjoint union of a triangle and a $K_4$, every $i \in \rm{core}(F) \setminus V_\a$ is adjacent to either the triangle or adjacent to every triangles of the $K_4$. Let $\b:= \a - \e_t + \e_u + \e_v$ for some $t,u,v$ are vertices of the $K_4$. Then $\b \in \NN^n$ and $\G_{\b}$ is a disjoint union of a triangle and a triangle of weight $(2,2,1)$. Moreover, since for any $i \in \rm{core}(F) \setminus V_\a$ $i$ is adjacent to either the triangle or adjacent to every triangles of the $K_4$,  then $i$ is adjacent to $\G_\b$ and hence $N[V_\a]= N[V_\b]$. It follows that $\nu(\G_\b  - N_\b(i)) \ge 4 - \deg_\b(i)$ for all $i \in \rm{core}(F) \setminus V_\b$. Moreover, due to Theorem \ref{monomial}, $\G_\b$ is $4$-saturating. Therefore, $F$ is a minimal  among the covers containing $N[V_\b]$ with $\G_\b$ is $4$-saturating and $\nu(\G_\b  - N_\b(i)) \ge 4 - \deg_\b(i)$ for all $i \in \rm{core}(F) \setminus V_\b$. This case can be pass to the case $\G_{V_\a}$ is union of two non-adjacent triangles. \par 

If $\G_\a$ is a union of two non-adjacent triangles connected by a path of length 2 then either the endpoints of the path are adjacent in $\G_\a$ or every vertex in $\rm{core}(F) \setminus V_\a$ is adjacent to at least one triangle. If it is in the first situation then $\G_{V_\a}$ is of the type 15 in Table 4. If it is in the last situation then as in the above case, after replacing $\a$ by $\b:=\a-\e_v+\e_i+\e_j$ where $v$ denotes the midpoint of the path and $\{i,j\}$ is an edge of the triangles of $\G_\a$, it can be pass to the case $\G_{V_\a}$ is two non-adjacent triangles.\par

If $\G_\a$ is a triangular prism then the condition on the degree of every vertex of core$(F) \setminus V_\a$ implies that these vertices are adjacent to every pentagon of this triangular prism. Similarly to the above cases, this case can be passed to the case $\G_{V_\a}$ is a pentagon. \par 

If $\G_\a$ is spanned by a cone on a $C_5$ then every vertex of core$(F) \setminus V_\a$ is adjacent to this $C_5$, and this case can be passed to the case $\G_{V_\a}$ is a pentagon. \par 

If $\G_\a$ is spanned by 4 triangles as in the type 3 of Table 2 , then it can be pass to the case $\G_{V_\a}$ is a union of two triangles meeting at a vertex. \par 

If $\G_\a$ is a union of two triangles sharing an edge of weight $(2,2)$ then every vertex of core$(F) \setminus V_\a$ is adjacent to one of these triangles, hence this case can be pass to the case $\G_{V_\a}$ is a triangle. \par 

If $\G_\a$ belongs to one of the other cases, then it is obvious that $\G_{V_\a}$ is one of subgraphs of Table 4.
\end{proof}

\begin{Corollary}  
$\depth R/I^4 > 0$ if and only if $\G$ has no dominating subgraph of the types of Table 4.
\end{Corollary}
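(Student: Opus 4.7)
The plan is to translate the depth condition into an associated-prime condition and then apply Theorem~\ref{asso 4} to the full vertex set $F = V$. Recall the standard Auslander--Buchsbaum type fact that $\depth R/I^4 = 0$ if and only if the graded maximal ideal $\mm = P_V$ is an associated prime of $I^4$. So I would prove the equivalent statement: $P_V \in \Ass(R/I^4)$ if and only if $\G$ has a dominating subgraph of one of the types of Table~4.

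By Theorem~\ref{asso 4}, $P_V$ is an associated prime of $I^4$ iff either $V$ is a minimal cover of $\G$, or $V$ is minimal among the covers of $\G$ containing $N[W]$ for some subgraph $W$ of the types in Table~4. The first alternative is degenerate: as soon as $\G$ has an edge, removing any vertex $v$ from $V$ leaves a cover, because every edge incident to $v$ has its other endpoint in $V\setminus\{v\}$. So that case contributes nothing (or gives the trivial case of an edgeless graph, which we may ignore since there is then no subgraph of any type in Table~4 either).

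The only real step is therefore the equivalence: $V$ is minimal among covers containing $N[W]$ if and only if $W$ is dominating in $\G$. For the non-trivial direction, suppose $W$ is \emph{not} dominating and pick $v \in V\setminus N[W]$. Then $V\setminus\{v\}$ is still a vertex cover of $\G$ (same reasoning as above) and still contains $N[W]$, so $V$ is not minimal among such covers. Conversely, if $W$ is dominating then $N[W] = V$, and trivially no proper subset of $V$ can contain $N[W]$, so $V$ is the unique, hence minimal, such cover.

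Combining these observations with Theorem~\ref{asso 4} yields that $P_V \in \Ass(R/I^4)$ precisely when $\G$ admits a dominating subgraph of one of the types listed in Table~4. Taking contrapositives gives the stated equivalence. The only mild subtlety is making sure that the statement ``$V$ is a minimal cover'' contributes nothing new, and this is handled by the one-line observation on removing a vertex; no combinatorial case analysis of Table~4 is needed.
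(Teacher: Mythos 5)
Your proposal is correct and follows essentially the same route as the paper: reduce to whether $P_V$ is an associated prime, apply Theorem~\ref{asso 4} with $F=V$, and observe that $V$ is minimal among the covers containing $N[W]$ precisely when $N[W]=V$, i.e.\ when $W$ is dominating. Your explicit dismissal of the ``$V$ is a minimal cover'' alternative and the justification that $V\setminus\{v\}$ remains a cover are just slightly more detailed versions of what the paper leaves implicit.
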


\begin{proof} Note that $\depth R/I^4 > 0$ if and only if $\mm = P_V$ is not an associated prime of $I^4.$ Since $V$ is minimal among the covers containing a subset $N$ if and only if $V = N$. Apply Theorem \ref{asso 4} to the case $F = V$, one obtains the assertion.
\end{proof}

\end{document}